\def\ps@pprintTitle{%
 \let\@oddhead\@empty
 \let\@evenhead\@empty
 \def\@oddfoot{}%
 \let\@evenfoot\@oddfoot}
\newtheorem{theorem}{Theorem}[section]
\newtheorem{corollary}[theorem]{Corollary}
\newtheorem{lemma}[theorem]{Lemma}
\newtheorem*{remark}{Remark}
\makeatletter \@addtoreset{equation}{section} \makeatother
\newcommand{\N}{\mathbb{N}}
\newcommand{\R}{\mathbb{R}}
\newcommand{\QQ}{\mathbb{Q}}
\newcommand{\PP}{\mathbb{P}}
\newcommand{\EE}{\mathbb{E}}
\newcommand{\VV}{\mathbb{V}\mathrm{ar}}
\newcommand{\bb}[1]{\boldsymbol{#1}}
\newcommand{\OO}{\mathcal O}
\newcommand{\oo}{\mathrm{o}}
\newcommand{\leqdef}{\vcentcolon=}
\newcommand{\reqdef}{=\vcentcolon}
\newcommand{\rd}{{\rm d}}
\newcommand{\ind}{\mathds{1}}
\begin{document}

\begin{frontmatter}

    \title{A precise local limit theorem for the multinomial distribution\\and some applications}%

    \author[a1]{Fr\'ed\'eric Ouimet\texorpdfstring{\corref{cor1}\fnref{fn1}}{)}}%

    \address[a1]{California Institute of Technology, Pasadena, USA.}%

    \cortext[cor1]{Corresponding author}%
    \ead{ouimetfr@caltech.edu}%


    \begin{abstract}
        In \cite{MR750392}, a precise local limit theorem for the multinomial distribution is derived by inverting the Fourier transform, where the error terms are explicit up to order $N^{-1}$.
        In this paper, we give an alternative (conceptually simpler) proof based on Stirling's formula and a careful handling of Taylor expansions, and we show how the result can be used to approximate multinomial probabilities on most subsets of $\R^d$.
        Furthermore, we discuss a recent application of the result to obtain asymptotic properties of Bernstein estimators on the simplex, we improve the main result in \cite{MR1922539} on the Le Cam distance bound between multinomial and multivariate normal experiments while simultaneously simplifying the proof, and we mention another potential application related to finely tuned continuity corrections.
    \end{abstract}

    \begin{keyword}
        multinomial distribution \sep local limit theorem \sep asymptotic statistics \sep multivariate normal \sep Bernstein estimators \sep Le Cam distance \sep deficiency \sep comparison of experiments
        \MSC[2020]{Primary: 62E20 Secondary: 62H10, 62H12, 62B15, 62G05, 62G07}
    \end{keyword}

\end{frontmatter}

\vspace{-1mm}
\section{Introduction}\label{sec:intro}

Given a set of probability weights $\bb{p}\in (0,1)^d$ that satisfies $\|\bb{p}\|_1 \leqdef \sum_{i=1}^d |p_i| < 1$, the $\mathrm{Multinomial}\hspace{0.2mm}(N,\bb{p})$ probability mass function is defined by
\vspace{-1mm}
\begin{equation}\label{eq:multinomial.pdf}
    p_N(\bb{k}) = \frac{N!}{(N - \|\bb{k}\|_1)! \prod_{i=1}^d k_i!} \cdot q^{N - \|k\|_1} \prod_{i=1}^d p_i^{k_i}, \quad \bb{k}\in \N_0^d, ~ \|\bb{k}\|_1 \leq N,
\end{equation}
where $q \leqdef 1 - \|\bb{p}\|_1 > 0$ and $N\in \N$.
The covariance matrix of the multinomial distribution is well-known to be $N \, \Sigma$, where $\Sigma \leqdef \text{diag}(\bb{p}) - \bb{p} \bb{p}^{\top}$,
see e.g.\ \cite[p.377]{MR2168237}.
From Theorem 1 in \cite{MR1157720}, we also know that $\det(\Sigma) = p_1p_2 \dots p_d \hspace{0.3mm} q$.
The purpose of this paper is to establish an asymptotic expansion for \eqref{eq:multinomial.pdf} in terms of the multivariate normal density with the same covariance profile, namely:
\begin{equation}\label{eq:phi.M}
    \phi_{\Sigma}(\bb{x}) \leqdef \frac{1}{\sqrt{(2\pi)^d \, p_1 p_2 \dots p_d \hspace{0.3mm} q}} \cdot \exp\Big(-\frac{1}{2} \bb{x}^{\top} \Sigma^{-1} \, \bb{x}\Big), \quad \bb{x}\in \R^d.
\end{equation}

\vspace{1mm}
\noindent
This kind of expansion can be useful in all sorts of estimation problems; we give three examples in Section~\ref{sec:applications}.
For a general presentation on local limit theorems, see e.g.\ \cite{MR1295242}.

\begin{remark}
    Throughout the paper, the notation $u = \OO(v)$ means that $\limsup_{N\to \infty} |u / v| < C$, where $C\in (0,\infty)$ is a universal constant.
    Whenever $C$ might depend on a parameter, we add a subscript (for example, $u = \OO_d(v)$).
    Similarly, $u = \oo(v)$ means that $\lim_{N\to \infty} |u / v| = 0$, and subscripts indicate which parameters the convergence rate can depend on.
\end{remark}

\section{Main result}\label{sec:main.result}

General local asymptotic expansions of probabilities related to the sums of lattice random vectors are well-known in the literature, see e.g.\ Theorem 1 in \cite{doi:10.1137/1114060}, Theorem 1 in \cite{doi:10.1007/BF00967926}, Theorem 22.1 in \cite{MR0436272}, etc.
However, the error terms in these expansions must be estimated themselves and as such are not explicit enough for applications.
By using the specificity of the distribution at hand, it is often possible to refine those results and obtain explicit and exact rates of convergence with a fraction of the mathematical machinery.

\vspace{3mm}
In the specific case of the multinomial distribution, a local limit theorem (up to an $\OO(N^{-1})$ error in \eqref{eq:LLT.order.2}) was proved for the binomial distribution  on page 141 of \cite{MR56861} and for the multinomial distribution in Lemma 2 of \cite{MR0478288}.
The latter result was extended to a version of \eqref{eq:LLT.order.2} that is symmetrized for the $d+1$ variables $\delta_{1,k_1},\dots,\delta_{d,k_d},-\sum_{i=1}^d \delta_{i,k_i}$ in \cite{MR750392}  by inverting the Fourier transform.%
\footnote{This was pointed out by a referee and was unknown to us at the time of writing the first draft.}
%


\vspace{3mm}
In this paper, we offer an alternative proof that we believe is conceptually simpler.
It is based on Stirling's formula and a careful handling of several Taylor expansions.
The computations generalize the ones on pages 437-438 of \cite{MR538319}, which were used to obtain a finely tuned continuity correction for the survival function of the binomial distribution (see the potential application in Section~\ref{sec:other.applications}).

\begin{theorem}[Local limit theorem]\label{thm:p.k.expansion}
    Pick any $\eta\in (0,1)$ and let
    \begin{equation}\label{eq:thm:p.k.expansion.condition}
        B_{N\hspace{-0.5mm},\bb{p}}(\eta) \leqdef \left\{\bb{k}\in \N_0^d : \bigg|\frac{\delta_{i,k_i}}{\sqrt{N} p_i}\bigg| \leq \eta N^{-1/3}, ~\text{for all } i\in \{1,2,\dots,d\}, ~\text{and} ~ \bigg|\sum_{i=1}^d \frac{\delta_{i,k_i}}{\sqrt{N} q}\bigg| \leq \eta N^{-1/3}\right\}
    \end{equation}
    denote the bulk of the multinomial distribution.
    Then, uniformly for $\bb{k}\in B_{N\hspace{-0.5mm},\bb{p}}(\eta)$, we have
    \begin{equation}\label{eq:LLT.order.2}
        \begin{aligned}
            \frac{p_N(\bb{k})}{N^{-d/2} \phi_{\Sigma}(\bb{\delta}_{\bb{k}})}
            &= 1 + N^{-1/2} \cdot \left\{\hspace{-1mm}
                \begin{array}{l}
                    - \frac{1}{2} \sum_{i=1}^d \delta_{i,k_i} \big\{\frac{1}{p_i} - \frac{1}{q}\big\} \\[2mm]
                    + \frac{1}{6} \sum_{i\hspace{-0.1mm},\hspace{0.1mm} j\hspace{-0.1mm},\hspace{0.1mm} \ell = 1}^d \delta_{i,k_i} \delta_{j,k_j} \delta_{\ell,k_\ell} \big\{\frac{1}{p_i^2} \bb{1}_{\{i = j = \ell\}} - \frac{1}{q^2}\big\}
                \end{array}
                \hspace{-1mm}\right\} \\[2mm]
            &\qquad+ N^{-1} \cdot \left\{\hspace{-1mm}
                \begin{array}{l}
                    - \frac{1}{12} \sum_{i\hspace{-0.1mm},\hspace{0.1mm} j\hspace{-0.1mm},\hspace{0.1mm} \ell \hspace{-0.1mm},\hspace{-0.1mm} m = 1}^d \delta_{i,k_i} \delta_{j,k_j} \delta_{\ell,k_\ell} \delta_{m,k_m} \big\{\frac{1}{p_i^2} \bb{1}_{\{i = j = \ell\}} - \frac{1}{q^2}\big\} \big\{\frac{1}{p_m} - \frac{1}{q}\big\} \\[2mm]
                    - \frac{1}{12} \sum_{i\hspace{-0.1mm},\hspace{0.1mm} j\hspace{-0.1mm},\hspace{0.1mm} \ell \hspace{-0.1mm},\hspace{-0.1mm} m = 1}^d \delta_{i,k_i} \delta_{j,k_j} \delta_{\ell,k_\ell} \delta_{m,k_m} \big\{\frac{1}{p_i^3} \bb{1}_{\{i = j = \ell = m\}} + \frac{1}{q^3}\big\} \\[2mm]
                    + \frac{1}{72} \big(\sum_{i\hspace{-0.1mm},\hspace{0.1mm} j\hspace{-0.1mm},\hspace{0.1mm} \ell = 1}^d \delta_{i,k_i} \delta_{j,k_j} \delta_{\ell,k_\ell} \big\{\frac{1}{p_i^2} \bb{1}_{\{i = j = \ell\}} - \frac{1}{q^2}\big\}\big)^2 \\[2mm]
                    + \frac{1}{8} \sum_{i \hspace{-0.1mm},\hspace{-0.1mm} j = 1}^d \delta_{i,k_i} \delta_{j,k_j} \big\{\frac{3}{p_i^2} \bb{1}_{\{i = j\}} + \frac{2}{p_i p_j} \bb{1}_{\{i < j\}} - \frac{2}{p_i q} + \frac{3}{q^2}\big\} \\[1.5mm]
                    + \frac{1}{12} \big\{1 - \sum_{i=1}^d p_i^{-1} - q^{-1}\big\}
                \end{array}
                \hspace{-1mm}\right\} \\[2mm]
            &\qquad+ \OO_{d,\bb{p},\eta}\Big(\frac{(1 + \|\bb{\delta}_{\bb{k}}\|_1)^9}{N^{3/2}}\Big), \qquad \text{as $N\to \infty$,}
        \end{aligned}
    \end{equation}
    where $\bb{\delta}_{\bb{b}} \leqdef (\delta_{1,b_1},\delta_{2,b_2},\dots,\delta_{d,b_d})^{\top}$ \hspace{-1mm}and
    \begin{equation}\label{def:delta}
        \delta_{i,b} \leqdef \frac{b - N p_i}{\sqrt{N}}, \quad b\in \R, ~i\in \{1,2,\dots,d\}.
    \end{equation}
    It is straightforward to verify that \eqref{eq:LLT.order.2} is equivalent to the following symmetrized version, which can also be found in Lemma 2.1 of \cite{MR750392}:
    \begin{equation}\label{eq:LLT.order.2.symmetrized}
        \begin{aligned}
            \frac{p_N(\bb{k})}{N^{-d/2} \phi_{\Sigma}(\bb{\delta}_{\bb{k}})}
            &= 1 + N^{-1/2} \cdot \left\{\hspace{-1mm}
                \begin{array}{l}
                    - \frac{1}{2} \sum_{i=1}^{d+1} \frac{\delta_{i,k_i}}{p_i} + \frac{1}{6} \sum_{i=1}^{d+1} \delta_{i,k_i} \Big(\frac{\delta_{i,k_i}}{p_i}\Big)^2
                \end{array}
                \hspace{-1mm}\right\} \\[0mm]
            &\qquad+ N^{-1} \cdot \left\{\hspace{-1mm}
                \begin{array}{l}
                    \frac{1}{2} \left\{\hspace{-1mm}
                    \begin{array}{l}
                        - \frac{1}{2} \sum_{i=1}^{d+1} \frac{\delta_{i,k_i}}{p_i} + \frac{1}{6} \sum_{i=1}^{d+1} \delta_{i,k_i} \Big(\frac{\delta_{i,k_i}}{p_i}\Big)^2
                    \end{array}
                    \hspace{-1mm}\right\}^2 \\[2mm]
                    + \frac{1}{4} \sum_{i=1}^{d+1} \Big(\frac{\delta_{i,k_i}}{p_i}\Big)^2
                    - \frac{1}{12} \sum_{i=1}^{d+1} \delta_{i,k_i} \Big(\frac{\delta_{i,k_i}}{p_i}\Big)^3 \\[2mm]
                    + \frac{1}{12} \big\{1 - \sum_{i=1}^{d+1} p_i^{-1}\big\}
                \end{array}
                \hspace{-1mm}\right\} \\[1mm]
            &\qquad+ \OO_{d,\bb{p},\eta}\Big(\frac{(1 + \|\bb{\delta}_{\bb{k}}\|_1)^9}{N^{3/2}}\Big), \qquad \text{as $N\to \infty$,}
        \end{aligned}
    \end{equation}
    where $\delta_{d+1,k_{d+1}} \leqdef -\sum_{i=1}^d \delta_{i,k_i}$ and $p_{d+1} \leqdef q = 1 - \|\bb{p}\|_1$.
\end{theorem}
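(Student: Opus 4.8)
The plan is to pass to logarithms, expand every factorial by Stirling's formula, and track the resulting Taylor series by their powers of $N^{-1/2}$. Throughout I would work in the symmetric coordinates obtained by adjoining $k_{d+1}\leqdef N-\|\bb{k}\|_1$ and $p_{d+1}\leqdef q$, so that $p_N(\bb{k}) = N!\prod_{i=1}^{d+1}p_i^{k_i}/k_i!$ with the constraints $\sum_{i=1}^{d+1}k_i=N$ and $\sum_{i=1}^{d+1}p_i=1$. Setting $x_i\leqdef \delta_{i,k_i}/(\sqrt{N}\,p_i)$, so that $k_i=Np_i(1+x_i)$ and $\sum_{i=1}^{d+1}\delta_{i,k_i}=0$, the bulk condition \eqref{eq:thm:p.k.expansion.condition} is exactly $|x_i|\leq \eta N^{-1/3}$ for every $i\in\{1,\dots,d+1\}$; this is the bound that makes all the remainders below uniformly controllable.

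First I would apply $\log n! = (n+\tfrac12)\log n - n + \tfrac12\log(2\pi) + \tfrac{1}{12n} + \OO(n^{-3})$ to each of the $d+2$ factorials. The $\pm N$ terms cancel, the pieces $k_i\log p_i$ cancel against the $\log(Np_i)$ part of $(k_i+\tfrac12)\log k_i$, and collecting the $\log N$ coefficients leaves
\[
\log p_N(\bb{k}) = -\tfrac{d}{2}\log(2\pi N) - \tfrac12\sum_{i=1}^{d+1}\log p_i - \sum_{i=1}^{d+1}\big(k_i+\tfrac12\big)\log(1+x_i) + \tfrac{1}{12N} - \sum_{i=1}^{d+1}\tfrac{1}{12k_i} + \OO(N^{-3}).
\]
Subtracting $\log\big(N^{-d/2}\phi_{\Sigma}(\bb{\delta}_{\bb{k}})\big)$ cancels the first two constants and introduces the term $+\tfrac12\,\bb{\delta}_{\bb{k}}^{\top}\Sigma^{-1}\bb{\delta}_{\bb{k}}$. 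Using the explicit inverse $(\Sigma^{-1})_{ij}=\ind_{\{i=j\}}/p_i + 1/q$ together with $\sum_{i=1}^{d}\delta_{i,k_i}=-\delta_{d+1,k_{d+1}}$ gives the clean symmetric identity $\bb{\delta}_{\bb{k}}^{\top}\Sigma^{-1}\bb{\delta}_{\bb{k}} = \sum_{i=1}^{d+1}\delta_{i,k_i}^2/p_i$.

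Next I would insert $(1+x)\log(1+x)=x+\sum_{n\geq2}\frac{(-1)^n}{n(n-1)}x^n$ and $\log(1+x)=\sum_{n\geq1}\frac{(-1)^{n-1}}{n}x^n$ and sort each contribution by its order, using $Np_i x_i^m=\delta_{i,k_i}^m/(N^{(m-2)/2}p_i^{m-1})$ and $\sum_i\delta_{i,k_i}=0$. The quadratic part of $\sum_i Np_i(1+x_i)\log(1+x_i)$ equals $\tfrac12\sum_i\delta_{i,k_i}^2/p_i$ and cancels the Gaussian quadratic form exactly, while the surviving pieces assemble into $\log\{p_N(\bb{k})/(N^{-d/2}\phi_{\Sigma}(\bb{\delta}_{\bb{k}}))\}=N^{-1/2}A+N^{-1}B+\OO(\cdots)$, where $A=-\tfrac12\sum_{i=1}^{d+1}\delta_{i,k_i}/p_i+\tfrac16\sum_{i=1}^{d+1}\delta_{i,k_i}^3/p_i^2$ and $B=\tfrac14\sum_{i=1}^{d+1}\delta_{i,k_i}^2/p_i^2-\tfrac{1}{12}\sum_{i=1}^{d+1}\delta_{i,k_i}^4/p_i^3+\tfrac{1}{12}\big(1-\sum_{i=1}^{d+1}p_i^{-1}\big)$; the constant in $B$ comes from the Stirling correction $\tfrac{1}{12N}-\sum_i\tfrac{1}{12k_i}$. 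Exponentiating through $e^L=1+L+\tfrac12L^2+\OO(L^3)$ lifts the cross term $\tfrac12(N^{-1/2}A)^2$ to the $N^{-1}$ level, so the coefficient there is $B+\tfrac12A^2$, which is precisely the order-$N^{-1}$ bracket of \eqref{eq:LLT.order.2.symmetrized}. The non-symmetrized form \eqref{eq:LLT.order.2} then follows by substituting $\delta_{d+1,k_{d+1}}=-\sum_{i=1}^{d}\delta_{i,k_i}$ and $p_{d+1}=q$ and regrouping the sums over $\{1,\dots,d\}$.

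The main obstacle is the uniform control of the remainder. I would bound it by carrying each Taylor expansion one order further with an explicit remainder and invoking the bulk bound $|x_i|\leq \eta N^{-1/3}$, equivalently $\|\bb{\delta}_{\bb{k}}\|_1=\OO_{\eta}(N^{1/6})$. The dominant $N^{-3/2}$ contribution is the cubic term $\tfrac16L^3\sim\tfrac16 N^{-3/2}A^3$ from the exponential; since $A$ is cubic in $\bb{\delta}_{\bb{k}}$, this is where the degree-$9$ factor $(1+\|\bb{\delta}_{\bb{k}}\|_1)^9$ originates, and every other remainder (the quintic Taylor terms of $(1+x)\log(1+x)$, the $x_i$-correction in $\sum_i 1/k_i$, and the cross term $N^{-3/2}AB$) has strictly lower degree. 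Because $L=\OO(1)$ uniformly on the bulk, the factor $e^{\xi}$ with $\xi$ between $0$ and $L$ stays bounded, which delivers the stated error $\OO_{d,\bb{p},\eta}\big((1+\|\bb{\delta}_{\bb{k}}\|_1)^9/N^{3/2}\big)$.
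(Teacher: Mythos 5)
Your proposal is correct and follows essentially the same route as the paper: Stirling's formula applied to each factorial, Taylor expansion of $(1+x)\log(1+x)$ together with the half-log and $1/(12k)$ corrections, cancellation of the quadratic form against the Gaussian exponent, and a final exponentiation that lifts $\tfrac{1}{2}A^2$ to the $N^{-1}$ level, with the degree-$9$ error traced to the cubic term of the exponential exactly as in the paper. The only organizational difference is that you work in the symmetrized coordinates $(k_1,\dots,k_{d+1})$ throughout and obtain \eqref{eq:LLT.order.2.symmetrized} first, whereas the paper keeps the last component separate (expanding $(1-\sum_i x_i)\log(1-\sum_i x_i)$ multivariately) and arrives at \eqref{eq:LLT.order.2} directly --- a bookkeeping choice rather than a different argument.
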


With the expansion in \eqref{eq:LLT.order.2}, we can easily approximate multinomial probabilities on any subset $A\subseteq \N_0^d$ using Riemann integrals.
This is an advantage of the representation \eqref{eq:LLT.order.2} over the symmetrized version \eqref{eq:LLT.order.2.symmetrized}.

\begin{corollary}\label{cor:approx.proba}
    For any set $A\subseteq \N_0^d$, let
    \begin{equation}
        \mathcal{H}_A \leqdef \bigcup_{\substack{\bb{k}\in A \\ \|\bb{k}\|_1 \leq N}} \, \Big\{[\delta_{1,k_1 - \frac{1}{2}}, \delta_{1,k_1 + \frac{1}{2}}] \times \dots \times [\delta_{d,k_d - \frac{1}{2}}, \delta_{d,k_d + \frac{1}{2}}]\Big\}
    \end{equation}
    denote the union of the normalized unit hypercubes centered at $\bb{k} - N \bb{p}$ for all points $\bb{k}$ that are both in $A$ and in the $d$-dimensional simplex of width $N$.
    Then,
    \begin{equation}\label{eq:cor:approx.proba}
        \begin{aligned}
            &\sum_{\substack{\bb{k}\in A \\ \|\bb{k}\|_1 \leq N}} \hspace{-1mm} p_N(\bb{k}) = \int_{\mathcal{H}_A} \hspace{-1mm} \phi_{\Sigma}(\bb{y}) \rd \bb{y} + N^{-1/2} \cdot \left\{\hspace{-1mm}
                \begin{array}{l}
                    - \frac{1}{2} \sum_{i=1}^d \big\{\frac{1}{p_i} - \frac{1}{q}\big\} \int_{\mathcal{H}_A} y_i \, \phi_{\Sigma}(\bb{y}) \rd \bb{y} \\[2mm]
                    + \frac{1}{6} \sum_{i\hspace{-0.1mm},\hspace{0.1mm} j\hspace{-0.1mm},\hspace{0.1mm} \ell = 1}^d \big\{\frac{1}{p_i^2} \bb{1}_{\{i = j = \ell\}} - \frac{1}{q^2}\big\} \int_{\mathcal{H}_A} y_i \, y_j \, y_{\ell} \, \phi_{\Sigma}(\bb{y}) \rd \bb{y}
                \end{array}
                \hspace{-1mm}\right\} \\[0mm]
            &\qquad+ N^{-1} \cdot \left\{\hspace{-1mm}
                \begin{array}{l}
                    -\frac{1}{24} \sum_{i=1}^d \int_{\mathcal{H}_A} \big\{\big([\Sigma^{-1} \bb{\delta}_{\bb{k}}]_i\big)^2 - [\Sigma^{-1}]_{ii}\big\} \, \phi_{\Sigma}(\bb{y}) \rd \bb{y} \\[2mm]
                    - \frac{1}{12} \sum_{i\hspace{-0.1mm},\hspace{0.1mm} j\hspace{-0.1mm},\hspace{0.1mm} \ell \hspace{-0.1mm},\hspace{-0.1mm} m = 1}^d \big\{\frac{1}{p_i^2} \bb{1}_{\{i = j = \ell\}} - \frac{1}{q^2}\big\} \big\{\frac{1}{p_m} - \frac{1}{q}\big\} \int_{\mathcal{H}_A} y_i \, y_j \, y_{\ell} \, y_m \, \phi_{\Sigma}(\bb{y}) \rd \bb{y} \\[2mm]
                    - \frac{1}{12} \sum_{i\hspace{-0.1mm},\hspace{0.1mm} j\hspace{-0.1mm},\hspace{0.1mm} \ell \hspace{-0.1mm},\hspace{-0.1mm} m = 1}^d \big\{\frac{1}{p_i^3} \bb{1}_{\{i = j = \ell = m\}} + \frac{1}{q^3}\big\} \int_{\mathcal{H}_A} y_i \, y_j \, y_{\ell} \, y_m \, \phi_{\Sigma}(\bb{y}) \rd \bb{y} \\[2mm]
                    + \frac{1}{72} \sum_{i\hspace{-0.1mm},\hspace{0.1mm} j\hspace{-0.1mm},\hspace{0.1mm} \ell \hspace{-0.1mm},\hspace{-0.1mm} i'\hspace{-0.5mm},\hspace{0.3mm} j'\hspace{-0.5mm},\hspace{0.3mm} \ell' = 1}^d
                    \Bigg\{\hspace{-1.5mm}
                    \begin{array}{l}
                        \big\{\frac{1}{p_i^2} \bb{1}_{\{i = j = \ell\}} - \frac{1}{q^2}\big\} \\[2mm]
                        \cdot \, \big\{\frac{1}{p_{i'}^2} \bb{1}_{\{i' = j' = \ell'\}} - \frac{1}{q^2}\big\}
                    \end{array}
                    \hspace{-1.5mm}\Bigg\}
                    \int_{\mathcal{H}_A} y_i \, y_j \, y_{\ell} \, y_{i'} \, y_{j'} \, y_{\ell'} \, \phi_{\Sigma}(\bb{y}) \rd \bb{y} \\[2mm]
                    + \frac{1}{8} \sum_{i \hspace{-0.1mm},\hspace{-0.1mm} j = 1}^d \big\{\frac{3}{p_i^2} \bb{1}_{\{i = j\}} + \frac{2}{p_i p_j} \bb{1}_{\{i < j\}} - \frac{2}{p_i q} + \frac{3}{q^2}\big\} \int_{\mathcal{H}_A} y_i \, y_j \, \phi_{\Sigma}(\bb{y}) \rd \bb{y} \\[1.5mm]
                    + \frac{1}{12} \big\{1 - \sum_{i=1}^d p_i^{-1} - q^{-1}\big\} \int_{\mathcal{H}_A} \phi_{\Sigma}(\bb{y}) \rd \bb{y}
                \end{array}
                \hspace{-1mm}\right\} \\[2mm]
            &\qquad+ \OO_{d,\bb{p}}(N^{-3/2}), \qquad \text{as } N\to \infty.
        \end{aligned}
    \end{equation}
    In particular, for any set $\widetilde{A}\subseteq \R^d$ such that
    \begin{equation}
        \int_{\frac{\widetilde{A} - N \bb{p}}{\sqrt{N}} \backslash \mathcal{H}_{\widetilde{A} \cap \N_0^d}} \phi_{\Sigma}(\bb{y}) \rd \bb{y} = \OO_{d,\bb{p}}(N^{-1/2}),
    \end{equation}
    (i.e.\ the boundary is not dominant) we have
    \begin{equation}
        \sum_{\substack{\bb{k}\in \widetilde{A} \\ \|\bb{k}\|_1 \leq N}} \hspace{-1mm} p_N(\bb{k}) = \int_{\frac{\widetilde{A} - N \bb{p}}{\sqrt{N}}} \, \phi_{\Sigma}(\bb{y}) \rd \bb{y} + \OO_{d,\bb{p}}(N^{-1/2}).
    \end{equation}
\end{corollary}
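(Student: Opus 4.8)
The plan is to convert the pointwise expansion of Theorem~\ref{thm:p.k.expansion} into a statement about sums by recognizing each summand as (essentially) a midpoint-rule evaluation of a Gaussian-weighted polynomial integral. Writing $C_{\bb{k}} \leqdef [\delta_{1,k_1-\frac12},\delta_{1,k_1+\frac12}]\times\dots\times[\delta_{d,k_d-\frac12},\delta_{d,k_d+\frac12}]$, the point $\bb{\delta}_{\bb{k}}$ is exactly the center of $C_{\bb{k}}$, and since $\delta_{i,k_i+\frac12}-\delta_{i,k_i-\frac12}=N^{-1/2}$, the cube has volume $N^{-d/2}$. Multiplying the ratio in \eqref{eq:LLT.order.2} by $N^{-d/2}\phi_{\Sigma}(\bb{\delta}_{\bb{k}})$ writes $p_N(\bb{k})$ as $N^{-d/2}\phi_{\Sigma}(\bb{\delta}_{\bb{k}})$ times a polynomial in the coordinates of $\bb{\delta}_{\bb{k}}$, up to the stated $\OO((1+\|\bb{\delta}_{\bb{k}}\|_1)^9 N^{-3/2})$ error. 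Summing over $\bb{k}\in A$ with $\|\bb{k}\|_1\le N$, each term $N^{-d/2}\,g(\bb{\delta}_{\bb{k}})\,\phi_{\Sigma}(\bb{\delta}_{\bb{k}})$, with $g$ one of the monomials appearing in \eqref{eq:LLT.order.2}, is the midpoint approximation of $\int_{C_{\bb{k}}} g(\bb{y})\phi_{\Sigma}(\bb{y})\rd \bb{y}$, and reassembling the cubes produces exactly the integrals over $\mathcal{H}_A$ on the right-hand side of \eqref{eq:cor:approx.proba}.

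The one point requiring care is that the midpoint rule is not exact. For smooth $f$ on $C_{\bb{k}}$ the symmetric $d$-dimensional midpoint formula gives $\int_{C_{\bb{k}}} f \rd\bb{y} = N^{-d/2}f(\bb{\delta}_{\bb{k}}) + \frac{N^{-d/2-1}}{24}\sum_{i=1}^d \partial_i^2 f(\bb{\delta}_{\bb{k}}) + \OO(N^{-d/2-2}\sup_{C_{\bb{k}}}|\partial^4 f|)$, the odd-order corrections vanishing by symmetry of the cube. For the monomial terms carrying a prefactor $N^{-1/2}$ or $N^{-1}$, the second-order correction is already $\OO(N^{-3/2})$ or smaller after summation, so only the leading term $f=\phi_{\Sigma}$ (prefactor $1$) contributes a genuine $N^{-1}$ correction. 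Using $\partial_i\phi_{\Sigma}(\bb{y}) = -[\Sigma^{-1}\bb{y}]_i\,\phi_{\Sigma}(\bb{y})$ and hence $\partial_i^2\phi_{\Sigma}(\bb{y}) = \big([\Sigma^{-1}\bb{y}]_i^2-[\Sigma^{-1}]_{ii}\big)\phi_{\Sigma}(\bb{y})$, the correction $-\frac{N^{-1}}{24}\sum_i\int_{\mathcal{H}_A}\partial_i^2\phi_{\Sigma}$ reproduces precisely the first $N^{-1}$ bracket in \eqref{eq:cor:approx.proba}.

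It remains to dispose of the tails and the accumulated errors. Outside the bulk $B_{N\hspace{-0.5mm},\bb{p}}(\eta)$ one has $|k_i-Np_i|>\eta p_i N^{2/3}$ for some $i$ (or the analogous bound for the $(d+1)$-st coordinate), i.e.\ a deviation of order $N^{1/6}$ standard deviations; a Chernoff/large-deviation bound then shows that both $\sum_{\bb{k}\notin B_{N\hspace{-0.5mm},\bb{p}}(\eta)} p_N(\bb{k})$ and the Gaussian mass on the complementary cubes are $\OO(\exp(-c N^{1/3}))=\oo(N^{-3/2})$, so the bulk and the full index set agree to the required order on both sides. The polynomial $\OO((1+\|\bb{\delta}_{\bb{k}}\|_1)^9 N^{-3/2})$ error, once multiplied by $N^{-d/2}\phi_{\Sigma}(\bb{\delta}_{\bb{k}})$ and summed, is itself a midpoint Riemann sum dominated by $N^{-3/2}\int_{\R^d}(1+\|\bb{y}\|_1)^9\phi_{\Sigma}(\bb{y})\rd\bb{y}=\OO(N^{-3/2})$, and the fourth-order midpoint remainders contribute only $\OO(N^{-2})$. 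Collecting the pieces yields \eqref{eq:cor:approx.proba}.

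For the final ``in particular'' assertion, every correction integral $\int_{\mathcal{H}_{\widetilde{A}\cap\N_0^d}}(\text{poly})\,\phi_{\Sigma}$ is bounded by $\int_{\R^d}(\text{poly})\,\phi_{\Sigma}=\OO(1)$, so the whole $N^{-1/2}$ and $N^{-1}$ blocks are $\OO(N^{-1/2})$; the leading integral over $\mathcal{H}_{\widetilde{A}\cap\N_0^d}$ differs from $\int_{(\widetilde{A}-N\bb{p})/\sqrt{N}}\phi_{\Sigma}$ by exactly the boundary discrepancy, which is $\OO(N^{-1/2})$ by hypothesis. I expect the main obstacle to be the bookkeeping of the multidimensional midpoint expansion: verifying that the only surviving second-order correction is the Laplacian of $\phi_{\Sigma}$ and matching it to the stated $-\frac{1}{24}\sum_i(\cdots)$ term, while confirming that all remaining midpoint corrections and remainders stay below $N^{-3/2}$.
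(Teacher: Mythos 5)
Your proposal is correct and follows essentially the same route as the paper: the paper likewise Taylor-expands $\phi_{\Sigma}$ about the cube center $\bb{\delta}_{\bb{k}}$ and integrates over the cube (the odd-order terms vanishing by symmetry, yielding exactly your $-\frac{1}{24N}\sum_i\partial_i^2$ Laplacian correction), discards the off-bulk contribution via a concentration bound (Lemma~\ref{lem:outside.bulk}, proved with a union bound and Azuma's inequality rather than your Chernoff bound), and applies Theorem~\ref{thm:p.k.expansion} on the bulk. Your explicit bookkeeping of why only the leading term picks up a genuine midpoint correction, and of the summed $\OO((1+\|\bb{\delta}_{\bb{k}}\|_1)^9 N^{-3/2})$ error, matches what the paper leaves implicit in its final ``the conclusion follows.''
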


\newpage
\section{Applications}\label{sec:applications}

Before turning to the proofs, we present two applications of Theorem~\ref{thm:p.k.expansion} related to asymptotic properties of Bernstein estimators (Section~\ref{sec:Bernstein.estimators}) and the Le Cam distance between multinomial and multivariate normal experiments (Section~\ref{sec:deficiency.distance}).
We also briefly mention another potential application related to finely tuned continuity corrections (Section~\ref{sec:other.applications}).

    \subsection{Asymptotic properties of Bernstein estimators}\label{sec:Bernstein.estimators}

        In \cite{MR0397977}, \cite{MR1910059} and \cite{MR2960952}, various asymptotic properties for Bernstein estimators of density functions and cumulative distribution functions (c.d.f.s) on the compact interval $[0,1]$ were studied, namely: bias, variance, mean squared error, mean integrated squared error, asymptotic normality and uniform strong consistency.
        When the observations are supported on the $d$-dimensional simplex, we can extend the definition of these estimators and study their asymptotic properties using the local limit theorem (Theorem~\ref{thm:p.k.expansion}).
        Precisely, assume that the observations $\bb{X}_1, \bb{X}_2, \dots, \bb{X}_n$ are independent, $F$ distributed (with density $f$) and supported on the simplex
        \begin{equation}\label{eq:def:simplex}
            \mathcal{S}\leqdef \big\{\bb{p}\in [0,1]^d: \|\bb{p}\|_1 \leq 1\big\}.
        \end{equation}
        Then, for $n, N\in \N$, let
        \begin{equation}\label{eq:cdf.Bernstein.estimator}
            F_{n,N}^{\star}(\bb{p}) \leqdef \sum_{\bb{k}\in \N_0^d \cap N \mathcal{S}} \left\{n^{-1} \sum_{i=1}^n \ind_{(-\bb{\infty}, \frac{\bb{k}}{N}]}(\bb{X}_i)\right\} p_N(\bb{k}), \quad \bb{p}\in \mathcal{S},
        \end{equation}
        be the Bernstein c.d.f.\ estimator on the simplex, and let
        \begin{equation}\label{eq:histogram.estimator}
            \hat{f}_{n,N}(\bb{p}) \leqdef \hspace{-3mm}\sum_{\bb{k}\in \N_0^d \cap (N-1) \mathcal{S}} \left\{\frac{(N-1+d)!}{(N-1)!} \cdot \frac{1}{n} \sum_{i=1}^n \ind_{(\frac{\bb{k}}{N}, \frac{\bb{k} + 1}{N}]}(\bb{X}_i)\right\} p_{N-1}(\bb{k}), \quad \bb{p}\in \mathcal{S},
        \end{equation}
        be the Bernstein density estimator on the simplex.
        Assuming that $F$ and $f$ are respectively three-times and two-times continuously differentiable, straightforward calculations (using the independence of the observations, see Sections 6 and 7 in \cite{arXiv:2002.07758} for details) show that
        \begin{align}\label{eq:cdf.Bernstein.estimator.var.asymp}
            \VV(F_{n,N}^{\star}(\bb{p}))
            &= n^{-1} \hspace{0.2mm} \bigg\{\sum_{\bb{k},\bb{\ell}\in \N_0^d \cap N\mathcal{S}} \hspace{-3mm} F((\bb{k} \wedge \bb{\ell}) / N) \, p_N(\bb{k}) p_N(\bb{\ell}) - \Big(\sum_{\bb{k}\in \N_0^d \cap N \mathcal{S}} \hspace{-3mm} F(\bb{k} / N) \, p_N(\bb{k})\Big)^2\bigg\} \notag \\
            &= n^{-1} \cdot \left\{\hspace{-1mm}
                \begin{array}{l}
                    F(\bb{p}) (1 - F(\bb{p})) + \OO_d(N^{-1}) \\[1.5mm]
                    + \sum_{i=1}^d \frac{\partial}{\partial x_i} F(\bb{p}) \sum_{\bb{k},\bb{\ell}\in \N_0^d \cap N\mathcal{S}} ((k_i \wedge \ell_i) / N - x_i) \, p_N(\bb{k}) p_N(\bb{\ell}) \\[1mm]
                    + \sum_{i,j=1}^d \OO\Big(\sum_{\bb{k},\bb{\ell}\in \N_0^d \cap N\mathcal{S}} |k_i / N - x_i| |k_j / N - x_j| \, p_N(\bb{k}) p_N(\bb{\ell})\Big)
                \end{array}
                \hspace{-1mm}\right\},
        \end{align}

        \vspace{-5mm}
        \noindent
        (here $\bb{k} \wedge \bb{\ell} \leqdef (k_i \wedge \ell_i)_{i=1}^d$) and
        \begin{align}\label{eq:histogram.estimator.var.asymp}
            &\VV(\hat{f}_{n,N}(\bb{p})) \notag \\[-1mm]
            &\quad= \frac{1}{n} \, \bigg(\frac{(N - 1 + d)!}{(N - 1)!}\bigg)^2 \left\{\hspace{-1mm}
                \begin{array}{l}
                    \sum_{\bb{k}\in \N_0^d \cap (N-1)\mathcal{S}} \int_{(\frac{\bb{k}}{N}, \frac{\bb{k} + 1}{N}]} \hspace{-0.5mm} f(\bb{y}) \rd \bb{y} \, p_{N-1}^2(\bb{k}) \\
                    -\Big(\sum_{\bb{k}\in \N_0^d \cap (N-1)\mathcal{S}} \int_{(\frac{\bb{k}}{N}, \frac{\bb{k} + 1}{N}]} \hspace{-0.5mm} f(\bb{y}) \rd \bb{y} \, p_{N-1}(\bb{k})\Big)^2
                \end{array}
                \hspace{-1mm}\right\} \notag \\
            &\quad= n^{-1} N^{d/2} \left\{\hspace{-1mm}
                \begin{array}{l}
                    (f(\bb{p}) + \OO_d(N^{-1})) \Big[(N - 1)^{d/2} \sum_{\bb{k}\in \N_0^d \cap (N-1)\mathcal{S}} p_{N-1}^2(\bb{k})\Big] \\[2.5mm]
                    + \sum_{i=1}^d \OO_d\left(\hspace{-1mm}
                        \begin{array}{l}
                            \sqrt{\sum_{\bb{k}\in \N_0^d \cap (N-1)\mathcal{S}} |k_i / N - x_i|^2 \, p_{N-1}(\bb{k})} \\
                            \cdot \, \sqrt{(N - 1)^d \sum_{\bb{k}\in \N_0^d \cap (N-1)\mathcal{S}} p_{N-1}^3(\bb{k})}
                        \end{array}
                        \hspace{-1mm}\right)
                    + \OO(N^{-d/2})
                \end{array}
                \hspace{-1mm}\right\}.
        \end{align}

        In \cite{arXiv:2002.07758}, the local limit theorem (Theorem~\ref{thm:p.k.expansion}) was applied to show that, for all $\bb{p}\in (0,1)^d$ such that $\|\bb{p}\|_1 < 1$, we have, as $N\to \infty$,
        \begin{align}
            (N-1)^{d/2} \hspace{-1mm} \sum_{\bb{k}\in \N_0^d \cap (N-1)\mathcal{S}} p_{N-1}^2(\bb{k})
            &= \int_{\R^d} \phi_{\Sigma}^2(\bb{y}) \rd \bb{y} + \oo_{d,\bb{p}}(1) \notag \\[-3mm]
            &= \frac{2^{-d/2}}{\sqrt{(2\pi)^d \det(\Sigma)}} \int_{\R^d} \phi_{\frac{1}{2} \Sigma}(\bb{y}) \rd \bb{y} + \oo_{d,\bb{p}}(1) \notag \\
            &= \frac{2^{-d/2}}{\sqrt{(2\pi)^d \det(\Sigma)}} \cdot 1 + \oo_{d,\bb{p}}(1) \notag \\[1mm]
            &= \big[(4\pi)^d p_1 p_2 \dots p_d (1 - \|\bb{p}\|_1)\big]^{-1/2} + \oo_{d,\bb{p}}(1), \\[2mm]
            (N-1)^d \hspace{-1mm} \sum_{\bb{k}\in \N_0^d \cap (N-1)\mathcal{S}} p_{N-1}^3(\bb{k})
            &= \int_{\R^d} \phi_{\Sigma}^3(\bb{y}) \rd \bb{y} + \oo_{d,\bb{p}}(1) \notag \\[-3mm]
            &= \frac{3^{-d/2}}{(2\pi)^d \det(\Sigma)} \int_{\R^d} \phi_{\frac{1}{3} \Sigma}(\bb{y}) \rd \bb{y} + \oo_{d,\bb{p}}(1) \notag \\
            &= \frac{3^{-d/2}}{(2\pi)^d \det(\Sigma)} \cdot 1 + \oo_{d,\bb{p}}(1) \notag \\[1.5mm]
            &= \big[(2\sqrt{3} \hspace{0.2mm}\pi)^d p_1 p_2 \dots p_d (1 - \|\bb{p}\|_1)\big]^{-1} + \oo_{d,\bb{p}}(1),
        \end{align}
        and, using integration by parts,
        \begin{align}
            &N^{1/2} \hspace{-1mm} \sum_{\bb{k},\bb{\ell}\in \N_0^d \cap N\mathcal{S}} ((k_i \wedge \ell_i) / N - p_i) \, p_N(\bb{k}) p_N(\bb{\ell}) \notag \\[-2.5mm]
            &\hspace{25mm}= 2 \cdot p_i (1 - p_i) \int_{-\infty}^{\infty} \frac{z}{p_i (1 - p_i)} \, \phi_{p_i(1 - p_i)}(z) \int_z^{\infty} \phi_{p_i(1 - p_i)}(y) \rd y \rd z + \oo_{d,\bb{p}}(1) \notag \\[0.5mm]
            &\hspace{25mm}= 2 \cdot p_i (1 - p_i) \, \Big[0 - \int_{-\infty}^{\infty} \phi_{p_i (1 - p_i)}^2(z) \rd z\Big] + \oo_{d,\bb{p}}(1) \notag \\[0.5mm]
            &\hspace{25mm}= \frac{- 2 p_i (1 - p_i)}{\sqrt{4\pi p_i(1 - p_i)}} \int_{-\infty}^{\infty} \phi_{\frac{1}{2} p_i (1 - p_i)}(z) \rd z + \oo_{d,\bb{p}}(1) \notag \\[0.5mm]
            &\hspace{25mm}= -\sqrt{\frac{p_i (1 - p_i)}{\pi}} + \oo_{d,\bb{p}}(1),
        \end{align}
        for all $i\in \{1,2,\dots,d\}$.
        By applying these estimates in \eqref{eq:cdf.Bernstein.estimator.var.asymp} and \eqref{eq:histogram.estimator.var.asymp}, we obtain the asymptotics of the variance for the Bernstein density and c.d.f.\ estimators in the interior of the simplex $\mathcal{S}$.
        From this, other asymptotic expressions can be (and were) derived such as the mean squared error and the mean integrated squared error.
        We can also optimize the bandwidth parameter $N$ with respect these expressions to implement a plug-in selection method, exactly as we would in the setting of traditional multivariate kernel estimators, see e.g.\ \cite[Section 6.5]{MR3329609} or \cite[Section 3.6]{MR3822372}.

        The asymptotic results in \cite{arXiv:2002.07758} nicely complement the works of \cite{MR2270097,MR3474765}, who considered the case of the $d$-dimensional unit hypercube, and the work of \cite{MR1293514}, who previously found asymptotic expressions for the bias, variance and mean squared error of Bernstein density estimators on the two-dimensional simplex.%
        \footnote{Errors in \cite{MR3474765} and related works were corrected in Appendix~B of \cite{MR4213687}.}
        The boundary properties of the density and c.d.f.\ estimators were also investigated in \cite{MR2925964} ($d = 1$) and \cite{arXiv:2006.11756} ($d \geq 1$).
        The local limit theorem (Theorem~\ref{thm:p.k.expansion}) might be used to prove other asymptotic properties or refine known ones.

    \newpage
    \subsection{Deficiency bounds between multinomial and multivariate normal experiments}\label{sec:deficiency.distance}

        In \cite{MR1922539}, the author finds an upper bound on the Le Cam distance (called $\Delta$-distance in \cite{MR1784901})
        between multinomial and multivariate normal experiments.
        His proof relies on an analogous bound for vectors of independent binomial random variables and an inductive argument that reduces the dimension of the binomials/normals comparison by a factor of $2$ at each step. The inductive part of his proof (which is the most difficult part, see Lemma 3) can be removed completely because Theorem~\ref{thm:p.k.expansion} allows us to bound the total variation between multinomial and multivariate normal distributions directly (by adapting the proof of Lemma 2 in his paper).
        The details are provided in Lemma~\ref{lem:prelim.Carter} and Theorem~\ref{thm:bound.deficiency.distance} below.
        For an excellent and concise review on Le Cam's theory for the comparison of statistical models, we refer the reader to \cite{MR3850766}.

        \vspace{3mm}
        The following result is analogous to Lemma 2 in \cite{MR1922539}.

        \begin{lemma}\label{lem:prelim.Carter}
            Let $\bb{K}\sim \mathrm{Multinomial}\hspace{0.2mm}(N,\bb{p})$ and $\bb{U}\sim \mathrm{Uniform}\hspace{0.2mm}(-\tfrac{1}{2},\tfrac{1}{2})^d$, where $\bb{K}$ and $\bb{U}$ are assumed independent.
            Define $\bb{X} \leqdef \bb{K} + \bb{U}$ and let $\widetilde{\PP}_{N,\bb{p}}$ be the law of $\bb{X}$.
            In particular, if $\PP_{N,\bb{p}}$ is the law of $\bb{K}$, note that
            \begin{equation}
                \widetilde{\PP}_{N,\bb{p}}(B) \leqdef \int_{N \mathcal{S} \cap \N_0^d} \int_{(-\frac{1}{2},\frac{1}{2})^d} \ind_{B}(\bb{k} + \bb{u}) \rd \bb{u} \, \PP_{N,\bb{p}}(\rd \bb{k}), \quad B\in \mathscr{B}(\R^d).
            \end{equation}
            Let $\QQ_{N,\bb{p}}$ be the law of the multivariate normal distribution $\mathrm{Normal}_d(N \bb{p}, N \, \Sigma)$, where recall $\Sigma \leqdef \mathrm{diag}(\bb{p}) - \bb{p} \bb{p}^{\top}$.
            Then, for all $\bb{p}\in (0,1)^d$ that satisfies $\|\bb{p}\|_1 < 1$, we have, as $N\to \infty$,
            \begin{equation}
                \|\widetilde{\PP}_{N,\bb{p}} - \QQ_{N,\bb{p}}\| = \OO\bigg(N^{-1/2} d \cdot \sqrt{\frac{\max\{p_1,\dots,p_d,q\}}{\min\{p_1,\dots,p_d,q\}}}\bigg),
            \end{equation}
            where $\| \cdot \|$ denotes the total variation norm.
        \end{lemma}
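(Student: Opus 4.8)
\noindent\emph{Sketch of the intended argument.} The plan is to represent both laws by Lebesgue densities and bound their $L^1$ distance. Since $\bb{U}$ is uniform on the unit cube, $\widetilde{\PP}_{N,\bb{p}}$ has the piecewise-constant density $g_N(\bb{x}) = p_N(\bb{k})$ for $\bb{x}$ in the half-open cube $C_{\bb{k}} \leqdef \bb{k} + (-\tfrac{1}{2},\tfrac{1}{2}]^d$ (with $\bb{k}\in \N_0^d \cap N\mathcal{S}$), while $\QQ_{N,\bb{p}}$ has the density $\varphi_N(\bb{x}) \leqdef N^{-d/2} \phi_{\Sigma}((\bb{x} - N\bb{p})/\sqrt{N})$, using $\det(\Sigma) = p_1 \cdots p_d \, q$; in particular $\varphi_N(\bb{k}) = N^{-d/2}\phi_{\Sigma}(\bb{\delta}_{\bb{k}})$. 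Up to the usual constant factor,
\[
    \|\widetilde{\PP}_{N,\bb{p}} - \QQ_{N,\bb{p}}\| = \tfrac{1}{2}\int_{\R^d} |g_N(\bb{x}) - \varphi_N(\bb{x})| \rd \bb{x} = \tfrac{1}{2}\sum_{\bb{k}} \int_{C_{\bb{k}}} |p_N(\bb{k}) - \varphi_N(\bb{x})| \rd \bb{x},
\]
so the integrand is tied directly to the ratio estimated in Theorem~\ref{thm:p.k.expansion}. First I would split the sum over $\bb{k}$ into the bulk $B_{N,\bb{p}}(\eta)$ and its complement. On the complement both masses are negligible: a Bernstein/Chernoff bound controls $\PP(\bb{K}\notin B_{N,\bb{p}}(\eta))$ and a Gaussian tail estimate controls $\int_{\R^d \setminus \bigcup_{\bb{k}\in B_{N,\bb{p}}(\eta)} C_{\bb{k}}} \varphi_N$, both of order $\exp(-c N^{1/3}) = \oo(N^{-1/2})$; the region where $p_N(\bb{k}) = 0$ (outside the simplex) is absorbed here, since $B_{N,\bb{p}}(\eta)$ sits well inside the simplex for large $N$.

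On each bulk cube I would use the exact splitting
\[
    \frac{p_N(\bb{k})}{\varphi_N(\bb{x})} - 1 = \Big(\frac{p_N(\bb{k})}{N^{-d/2}\phi_{\Sigma}(\bb{\delta}_{\bb{k}})} - 1\Big) + \frac{p_N(\bb{k})}{N^{-d/2}\phi_{\Sigma}(\bb{\delta}_{\bb{k}})} \Big(\frac{\varphi_N(\bb{k})}{\varphi_N(\bb{x})} - 1\Big),
\]
which separates a \emph{local-limit part} and a \emph{within-cube part}. For the first, Theorem~\ref{thm:p.k.expansion} gives that the bracket equals $N^{-1/2}$ times the odd cubic polynomial in $\bb{\delta}_{\bb{k}}$ displayed in \eqref{eq:LLT.order.2}, plus $\OO(N^{-1})$ terms; weighting by $N^{-d/2}\phi_{\Sigma}(\bb{\delta}_{\bb{k}})$ and passing from the Riemann sum over $\bb{k}$ (grid spacing $N^{-1/2}$ in each $\delta$-coordinate, cell volume $N^{-d/2}$) to the Gaussian integral reduces this contribution to $N^{-1/2}$ times absolute Gaussian moments of that polynomial, with a relative Riemann-sum error that is $\OO(N^{-1})$. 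For the within-cube part, writing $\bb{x} = \bb{k} + \bb{u}$ with $\bb{u}\in(-\tfrac{1}{2},\tfrac{1}{2}]^d$ and expanding the quadratic form gives $\varphi_N(\bb{k})/\varphi_N(\bb{x}) = \exp(N^{-1/2} \bb{\delta}_{\bb{k}}^{\top}\Sigma^{-1}\bb{u} + \tfrac{1}{2}N^{-1}\bb{u}^{\top}\Sigma^{-1}\bb{u})$, so that $\varphi_N(\bb{k})/\varphi_N(\bb{x}) - 1 = N^{-1/2}\bb{\delta}_{\bb{k}}^{\top}\Sigma^{-1}\bb{u}$ up to a remainder contributing only at order $N^{-1}$ after integration; integrating $|\bb{\delta}_{\bb{k}}^{\top}\Sigma^{-1}\bb{u}|$ over the cube is $\le \tfrac{1}{\sqrt{12}}\|\Sigma^{-1}\bb{\delta}_{\bb{k}}\|$ by $\EE|\,\cdot\,|\le(\EE|\,\cdot\,|^2)^{1/2}$ (the uniform cube has covariance $\tfrac{1}{12} I$), and summing against $\phi_{\Sigma}$ gives $N^{-1/2}\EE\|\Sigma^{-1}\bb{Y}\| \le N^{-1/2}\sqrt{\mathrm{tr}(\Sigma^{-1})}$ with $\bb{Y}\sim\mathrm{Normal}_d(\bb{0},\Sigma)$.

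The two bulk contributions are then assembled using the explicit inverse $\Sigma^{-1} = \mathrm{diag}(p_1^{-1},\dots,p_d^{-1}) + q^{-1}\bb{1}\bb{1}^{\top}$, so that $\mathrm{tr}(\Sigma^{-1}) = \sum_{i=1}^d p_i^{-1} + d\,q^{-1} \le 2d/\min\{p_1,\dots,p_d,q\}$; since $\max\{p_1,\dots,p_d,q\}\ge (d+1)^{-1}$, this yields $\sqrt{\mathrm{tr}(\Sigma^{-1})} = \OO(d\sqrt{\max/\min})$, which is the source of the stated dependence. I expect the main obstacle to be the matching bookkeeping for the local-limit part: a naive term-by-term bound on the first-order polynomial $-\tfrac{1}{2}\sum_i \delta_{i,k_i}\{p_i^{-1} - q^{-1}\}$ loses a factor $\sqrt{d}$, so one must first exploit the linear constraint $\sum_{i=1}^{d+1}\delta_{i,k_i} = 0$ (equivalently, the freedom to replace $p_i^{-1}$ by $p_i^{-1} - q^{-1}$ in the symmetrized form \eqref{eq:LLT.order.2.symmetrized}) and then bound the moment by $\sqrt{\bb{a}^{\top}\Sigma\,\bb{a}}$ with $a_i = p_i^{-1} - q^{-1}$, controlling the absolute moments of the cubic term through the same covariance structure.

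In short, the factorization, the tail estimates, and the Riemann-sum error are routine; the delicate point is to pass from the pointwise expansions to absolute Gaussian moments so that it is the covariance structure of $\Sigma$ — not a coordinatewise estimate — that dictates the constant. This cancellation, together with the trace bound $\sqrt{\mathrm{tr}(\Sigma^{-1})} = \OO(d\sqrt{\max/\min})$, is precisely what collapses the a priori $d^{3/2}$-type estimates into the claimed $\OO(N^{-1/2}\,d\,\sqrt{\max/\min})$.
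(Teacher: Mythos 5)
Your plan is correct in outline, but it takes a genuinely different route from the paper's. The paper never computes the $L^1$ distance of the densities directly: it starts from the Hellinger-type inequality
\begin{equation*}
    \|\widetilde{\PP}_{N,\bb{p}} - \QQ_{N,\bb{p}}\| \leq \sqrt{2 \, \PP(\bb{X}\in B_{N\hspace{-0.5mm},\bb{p}}^c(1/2)) + \EE\Big[\log\Big(\tfrac{\rd \widetilde{\PP}_{N,\bb{p}}}{\rd \QQ_{N,\bb{p}}}(\bb{X})\Big) \ind_{\{\bb{X}\in B_{N\hspace{-0.5mm},\bb{p}}(1/2)\}}\Big]}
\end{equation*}
borrowed from Carter, splits the expectation of the log-likelihood ratio into three pieces, and exploits a cancellation that is simply unavailable to you: under the expectation, the odd $N^{-1/2}$-order polynomial in \eqref{eq:LLT.order.2} contributes only $\OO(N^{-1})$ because $\EE[\delta_{i,K_i}]=0$ and the third central moments of the multinomial are $\OO(N)$ (this is what Lemma~\ref{lem:Leblanc.2012.boundary.Lemma.1.with.set.A} is for), and likewise the uniform-smoothing term is $\OO(N^{-1}\mathrm{tr}(\Sigma^{-1}))$ because $\EE[\bb{U}]=\bb{0}$. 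The whole Kullback--Leibler-type quantity is thus $\OO(N^{-1}d^{\hspace{0.2mm}2}\max/\min)$, and the square root delivers the claim. In your direct $L^1$ computation the absolute value freezes the $N^{-1/2}$ terms, so the entire burden shifts to bounding $\EE|P(\bb{Y})|$ and $\EE\|\Sigma^{-1}\bb{Y}\|$ for $\bb{Y}\sim\mathrm{Normal}_d(\bb{0},\Sigma)$; your proposed fix --- Cauchy--Schwarz against the covariance structure, using $\bb{a}^{\top}\Sigma\,\bb{a}\leq\sum_i p_i a_i^2$, Isserlis-type bounds for the cubic part, $\mathrm{tr}(\Sigma^{-1})\leq 2d/\min\{p_1,\dots,p_d,q\}$ and $\max\{p_1,\dots,p_d,q\}\geq(d+1)^{-1}$ --- does close the bookkeeping, and your diagnosis that a coordinatewise bound loses a factor $\sqrt{d}$ is exactly right. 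What each approach buys: the paper's route is lighter on Gaussian moment estimates but pays with the indicator-mismatch terms and the appendix computations of multinomial central moments up to order six (Lemma~\ref{lem:moments.4.to.6}); yours avoids the Hellinger detour entirely and in principle identifies the true leading constant of the total variation (the $L^1$ norm of the first Edgeworth correction plus the $\tfrac{1}{\sqrt{12}}\,\EE\|\Sigma^{-1}\bb{Y}\|$ smoothing contribution, both genuinely of order $N^{-1/2}$), at the cost of a Riemann-sum argument for the non-smooth integrand $\phi_{\Sigma}|P|$ and of tracking the $d$- and $\bb{p}$-dependence of the $N^{-1}$-order coefficients explicitly rather than absorbing them into an expectation.
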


        \begin{proof}
            By the comparison of the total variation norm with the Hellinger distance on page 726 of \cite{MR1922539}, we already know that
            \begin{equation}\label{eq:first.bound.total.variation}
                \|\widetilde{\PP}_{N,\bb{p}} - \QQ_{N,\bb{p}}\| \leq \sqrt{2 \, \PP(\bb{X}\in B_{N\hspace{-0.5mm},\bb{p}}^c(1/2)) + \EE\bigg[\log\Big(\frac{\rd \widetilde{\PP}_{N,\bb{p}}}{\rd \QQ_{N,\bb{p}}}(\bb{X})\Big) \, \ind_{\{\bb{X}\in B_{N\hspace{-0.5mm},\bb{p}}(1/2)\}}\bigg]}.
            \end{equation}
            By applying a union bound followed by Bernstein's inequality for the binomial distribution, we get, for $N$ large enough,
            \begin{align}\label{eq:concentration.bound}
                \PP(\bb{X}\in B_{N\hspace{-0.2mm},\bb{p}}^c(1/2))
                &\leq \sum_{i=1}^d \PP\Bigl(|K_i - N p_i|  > \frac{p_i}{2d} N^{2/3} - 1\Bigr) + \PP\Bigl(\big|\|\bb{K}\|_1 - N (1 - q)\big|  > \frac{q}{2} N^{2/3} - d\Bigr) \notag \\
                &\leq 2 (d+1) \exp\biggl(- \frac{(\min\{p_1,\dots,p_d,q\} N^{2/3} / (2d) - d)^2}{4 N}\biggr) \notag \\
                &\leq 100 \, d \exp\biggl(-\frac{\min\{p_1,\dots,p_d,q\}^2}{100 \, d^{\hspace{0.2mm}2}} \cdot N^{1/3}\biggr).
            \end{align}
            For the expectation in \eqref{eq:first.bound.total.variation}, if $p_N(\bb{x})$ denotes the density function associated with $\widetilde{\PP}_{N,\bb{p}}$ (i.e.\ it is equal to $p_N(\bb{k})$ whenever $\bb{k}\in N \mathcal{S} \cap \N_0^d$ is closest to $\bb{x}$), then
            \begin{align}\label{eq:I.plus.II.plus.III}
                \EE\bigg[\log\bigg(\frac{\rd \widetilde{\PP}_{N,\bb{p}}}{\rd \QQ_{N,\bb{p}}}(\bb{X})\bigg) \, \ind_{\{\bb{X}\in B_{N\hspace{-0.5mm},\bb{p}}(1/2)\}}\bigg]
                &=\EE\bigg[\log\bigg(\frac{p_N(\bb{X})}{N^{-d/2} \phi_{\Sigma}(\bb{\delta}_{\bb{X}})}\bigg) \, \ind_{\{\bb{X}\in B_{N\hspace{-0.5mm},\bb{p}}(1/2)\}}\bigg] \notag \\[1mm]
                &= \EE\bigg[\log\bigg(\frac{p_N(\bb{K})}{N^{-d/2} \phi_{\Sigma}(\bb{\delta}_{\bb{K}})}\bigg) \, \ind_{\{\bb{K}\in B_{N\hspace{-0.5mm},\bb{p}}(1/2)\}}\bigg] \notag \\
                &\quad+ \EE\bigg[\log\bigg(\frac{N^{-d/2} \phi_{\Sigma}(\bb{\delta}_{\bb{K}})}{N^{-d/2} \phi_{\Sigma}(\bb{\delta}_{\bb{X}})}\bigg) \, \ind_{\{\bb{K}\in B_{N\hspace{-0.5mm},\bb{p}}(1/2)\}}\bigg] \notag \\[1mm]
                &\quad+ \EE\bigg[\log\bigg(\frac{p_N(\bb{K})}{N^{-d/2} \phi_{\Sigma}(\bb{\delta}_{\bb{X}})}\bigg) \, (\ind_{\{\bb{X}\in B_{N\hspace{-0.5mm},\bb{p}}(1/2)\}} - \ind_{\{\bb{K}\in B_{N\hspace{-0.5mm},\bb{p}}(1/2)\}})\bigg] \notag \\[1mm]
                &\reqdef (\mathrm{I}) + (\mathrm{II}) + (\mathrm{III}).
            \end{align}
            By Theorem~\ref{thm:p.k.expansion}, we have
            \begin{align}\label{eq:estimate.I.begin}
                (\mathrm{I})
                &= N^{-1/2} \cdot \EE\left[\left\{\hspace{-1mm}
                    \begin{array}{l}
                        - \frac{1}{2} \sum_{i=1}^d \delta_{i,K_i} \big\{\frac{1}{p_i} - \frac{1}{q}\big\} \\[2mm]
                        + \frac{1}{6} \sum_{i\hspace{-0.1mm},\hspace{0.1mm} j\hspace{-0.1mm},\hspace{0.1mm} \ell = 1}^d \delta_{i,K_i} \delta_{j,K_j} \delta_{\ell,K_\ell} \big\{\frac{1}{p_i^2} \bb{1}_{\{i = j = \ell\}} - \frac{1}{q^2}\big\}
                    \end{array}
                    \hspace{-1mm}\right\} \, \ind_{\{\bb{K}\in B_{N\hspace{-0.5mm},\bb{p}}(1/2)\}}\right] \notag \\[1mm]
                &\quad+ N^{-1} \cdot \OO\left(\hspace{-1mm}
                    \begin{array}{l}
                        \Big|\sum_{i=1}^{d+1} \frac{\EE[(K_i - N p_i)^2]}{N p_i^2}\Big| + \Big|\sum_{\substack{i,j=1 \\ i \neq j}}^{d+1} \frac{\EE[(K_i - N p_i) (K_j - N p_j)]}{N p_i p_j}\Big| \\[2mm]
                        + \Big|\sum_{i=1}^{d+1} \frac{\EE[(K_i - N p_i)^6]}{N^3 p_i^4}\Big| + \Big|\sum_{\substack{i,j=1 \\ i \neq j}}^{d+1} \frac{\EE[(K_i - N p_i)^3 (K_j - N p_j)^3]}{N^3 p_i^2 p_j^2}\Big| \\[2.5mm]
                        + \Big|\sum_{i=1}^{d+1} \frac{\EE[(K_i - N p_i)^4]}{N^2 p_i^3}\Big| + 1 + \sum_{i=1}^{d+1} p_i^{-1}
                    \end{array}
                    \hspace{-1mm}\right) \notag \\
                &\quad+ \OO_{d,\bb{p}}(N^{-3/2}).
            \end{align}
            The expression inside the big $\OO(\cdot)$ term here is crucial to get the correct bound on the Le Cam distance in Theorem~\ref{thm:bound.deficiency.distance}.
            The error terms in Lemma 2 of \cite{MR0478288} would not be enough for this purpose; it is part of the reason why an expression as precise as the one in Theorem~\ref{thm:p.k.expansion} is necessary for this application.
            By Lemma~\ref{lem:Leblanc.2012.boundary.Lemma.1} and Lemma~\ref{lem:moments.4.to.6}, the big $\OO(\cdot)$ term above is
            \begin{equation}\label{eq:estimate.I.next}
                =\OO\bigg(d^{\hspace{0.2mm}2} + \sum_{i=1}^{d+1} p_i^{-1}\bigg) = \OO\bigg(d^{\hspace{0.2mm}2} \cdot \frac{\max\{p_1,\dots,p_d,q\}}{\min\{p_1,\dots,p_d,q\}}\bigg).
            \end{equation}
            By putting \eqref{eq:estimate.I.next} in \eqref{eq:estimate.I.begin} and using Lemma~\ref{lem:Leblanc.2012.boundary.Lemma.1.with.set.A}, we get
            \begin{align}\label{eq:estimate.I}
                (\mathrm{I})
                &= N^{-1/2} \left\{\hspace{-1mm}
                    \begin{array}{l}
                        \frac{N^{-1/2}}{6} \sum_{i,j,\ell=1}^d \left(\hspace{-1mm}
                        \begin{array}{l}
                            2 p_i p_j p_{\ell} - \ind_{\{i = j\}} p_i p_{\ell} - \ind_{\{j = \ell\}} p_i p_j \\
                            - \ind_{\{i = \ell\}} p_j p_{\ell} + \ind_{\{i = j = \ell\}} p_i
                        \end{array}
                        \hspace{-1mm}\right)\big\{\frac{1}{p_i^2} \bb{1}_{\{i = j = \ell\}} - \frac{1}{q^2}\big\} \\[4mm]
                        + \, \OO_d\Big(\frac{(\PP(\bb{K}\in B_{N\hspace{-0.5mm},\bb{p}}^c(1/2)))^{1/4}}{(\min\{p_1,\dots,p_d,q\})^2}\Big)
                    \end{array}
                    \hspace{-1mm}\right\} \notag \\[1mm]
                &\quad+ \OO\bigg(N^{-1} d^{\hspace{0.2mm}2} \cdot \frac{\max\{p_1,\dots,p_d,q\}}{\min\{p_1,\dots,p_d,q\}}\bigg) + \OO_{d,\bb{p}}(N^{-3/2}) \notag \\[2mm]
                &= \OO_d\bigg(\frac{N^{-1/2} \big(\PP(\bb{K}\in B_{N\hspace{-0.5mm},\bb{p}}^c(1/2))\big)^{1/4}}{(\min\{p_1,\dots,p_d,q\})^2}\bigg) + \OO\bigg(N^{-1} d^{\hspace{0.2mm}2} \cdot \frac{\max\{p_1,\dots,p_d,q\}}{\min\{p_1,\dots,p_d,q\}}\bigg).
            \end{align}

            For the term $(\mathrm{II})$ in \eqref{eq:I.plus.II.plus.III},
            \begin{align}
                &\log\bigg(\frac{N^{-d/2} \phi_{\Sigma}(\bb{\delta}_{\bb{K}})}{N^{-d/2} \phi_{\Sigma}(\bb{\delta}_{\bb{X}})}\bigg) \notag \\[1mm]
                &\quad= \frac{N^{-1}}{2} (\bb{X} - N \bb{p})^{\top} \Sigma^{-1} (\bb{X} - N \bb{p}) - \frac{N^{-1}}{2} (\bb{K} - N \bb{p})^{\top} \Sigma^{-1} (\bb{K} - N \bb{p}) \notag \\[1mm]
                &\quad= \frac{N^{-1}}{2} (\bb{X} - \bb{K})^{\top} \Sigma^{-1} (\bb{X} - \bb{K}) + \frac{N^{-1}}{2} \bigg[\hspace{-1mm}
                    \begin{array}{l}
                        (\bb{X} - \bb{K})^{\top} \Sigma^{-1} (\bb{K} - N \bb{p}) \\[0.5mm]
                        + (\bb{K} - N \bb{p})^{\top} \Sigma^{-1} (\bb{X} - \bb{K})
                    \end{array}
                    \hspace{-1mm}\bigg].
            \end{align}
            With our assumption that $\bb{K}$ and $\bb{X} - \bb{K}\sim \mathrm{Uniform}\hspace{0.2mm}(-\tfrac{1}{2},\tfrac{1}{2})^d$ are independent, we get
            \begin{equation}\label{eq:estimate.II}
                \begin{aligned}
                    (\mathrm{II})
                    &= \frac{N^{-1}}{2} \sum_{i=1}^d \frac{(\Sigma^{-1})_{ii}}{12} + \OO_d\bigg(\frac{N^{-1/2} \big(\PP(\bb{K}\in B_{N\hspace{-0.5mm},\bb{p}}^c(1/2))\big)^{1/2}}{(\min\{p_1,\dots,p_d,q\})^2}\bigg) \\
                    &= \frac{N^{-1}}{2} \sum_{i=1}^d \frac{(p_i^{-1} + q^{-1})}{12} + \OO_d\bigg(\frac{N^{-1/2} \big(\PP(\bb{K}\in B_{N\hspace{-0.5mm},\bb{p}}^c(1/2))\big)^{1/2}}{(\min\{p_1,\dots,p_d,q\})^2}\bigg) \\
                    &= \OO\bigg(N^{-1} d^{\hspace{0.2mm}2} \cdot \frac{\max\{p_1,\dots,p_d,q\}}{\min\{p_1,\dots,p_d,q\}}\bigg) + \OO_d\bigg(\frac{N^{-1/2} \big(\PP(\bb{K}\in B_{N\hspace{-0.5mm},\bb{p}}^c(1/2))\big)^{1/2}}{(\min\{p_1,\dots,p_d,q\})^2}\bigg),
                \end{aligned}
            \end{equation}
            where we used the expression $(\Sigma^{-1})_{ij} = p_i^{-1} \ind_{\{i = j\}} + q^{-1}$ found in \citep[eq.21]{MR1157720}.
            
            For the term $(\mathrm{III})$ in \eqref{eq:I.plus.II.plus.III}, the following very rough bound from \eqref{eq:LLT.order.2.symmetrized} in Theorem~\ref{thm:p.k.expansion},
            \begin{equation}
                \log\bigg(\frac{p_N(\bb{K})}{N^{-d/2} \phi_{\Sigma}(\bb{\delta}_{\bb{K}})}\bigg) \, \ind_{\{\bb{K}\in B_{N\hspace{-0.5mm},\bb{p}}(1/2)\}} = \OO\big(N \, d \, (\min\{p_1,\dots,p_d,q\})^{-2}\big),
            \end{equation}
            yields
            \begin{equation}\label{eq:estimate.III}
                (\mathrm{III}) = \OO\bigg(N \, d \, (\min\{p_1,\dots,p_d,q\})^{-2} \, \EE\Big[|\ind_{\{\bb{X}\in B_{N\hspace{-0.5mm},\bb{p}}(1/2)\}} - \ind_{\{\bb{K}\in B_{N\hspace{-0.5mm},\bb{p}}(1/2)\}}|\Big]\bigg).
            \end{equation}
            Putting \eqref{eq:estimate.I}, \eqref{eq:estimate.II} and \eqref{eq:estimate.III} in \eqref{eq:I.plus.II.plus.III}, together with the exponential bound
            \begin{equation}
                \begin{aligned}
                    &\EE\Big[|\ind_{\{\bb{X}\in B_{N\hspace{-0.5mm},\bb{p}}(1/2)\}} - \ind_{\{\bb{K}\in B_{N\hspace{-0.5mm},\bb{p}}(1/2)\}}|\Big] \\
                    &\quad=\EE\Big[|\ind_{\{\bb{X}\in B_{N\hspace{-0.5mm},\bb{p}}(1/2), \bb{K}\in B_{N\hspace{-0.5mm},\bb{p}}^c(1/2)\}} + \ind_{\{\bb{X}\in B_{N\hspace{-0.5mm},\bb{p}}(1/2), \bb{K}\in B_{N\hspace{-0.5mm},\bb{p}}(1/2)\}} - \ind_{\{\bb{K}\in B_{N\hspace{-0.5mm},\bb{p}}(1/2)\}}|\Big] \\[1.5mm]
                    &\quad=\EE\Big[|\ind_{\{\bb{X}\in B_{N\hspace{-0.5mm},\bb{p}}(1/2), \bb{K}\in B_{N\hspace{-0.5mm},\bb{p}}^c(1/2)\}} - \ind_{\{\bb{X}\in B_{N\hspace{-0.5mm},\bb{p}}^c(1/2), \bb{K}\in B_{N\hspace{-0.5mm},\bb{p}}(1/2)\}}|\Big] \\[2mm]
                    &\quad\leq \PP(\bb{K}\in B_{N\hspace{-0.5mm},\bb{p}}^c(1/2)) + \PP(\bb{X}\in B_{N\hspace{-0.5mm},\bb{p}}^c(1/2)) \\
                    &\quad\leq 2 \cdot 100 \, d \exp\Big(-\frac{N^{1/3}}{100 d^{\hspace{0.2mm}2}}\Big),
                \end{aligned}
            \end{equation}
            yields, as $N\to \infty$,
            \begin{equation}\label{eq:I.plus.II.plus.III.end}
                \begin{aligned}
                    \EE\bigg[\log\bigg(\frac{\rd \widetilde{\PP}_{N,\bb{p}}}{\rd \QQ_{N,\bb{p}}}(\bb{X})\bigg) \, \ind_{\{\bb{X}\in B_{N\hspace{-0.5mm},\bb{p}}(1/2)\}}\bigg]
                    &= (\mathrm{I}) + (\mathrm{II}) + (\mathrm{III}) \\
                    &= \OO\bigg(N^{-1} d^{\hspace{0.2mm}2} \cdot \frac{\max\{p_1,\dots,p_d,q\}}{\min\{p_1,\dots,p_d,q\}}\bigg).
                \end{aligned}
            \end{equation}
            Now, putting \eqref{eq:concentration.bound} and \eqref{eq:I.plus.II.plus.III.end} together in \eqref{eq:first.bound.total.variation} gives the conclusion.
        \end{proof}

        The next result improves the main theorem in \cite{MR1922539} (Theorem 1) by removing a factor $\log d$ in \eqref{eq:thm:bound.deficiency.distance.bound} (denoted by $\log m$ in his article).
        Note that this factor is proportional to the number of steps in the inductive argument in \cite{MR1922539}.
        Given the above details, our proof is drastically simpler because Lemma 1 and the inductive part of the proof (Lemma 3) in \cite{MR1922539} have been removed completely (which is coherent with us being able to remove the $\log d$ factor).

        \begin{theorem}[Bound on the Le Cam distance]\label{thm:bound.deficiency.distance}
            For any given $R > 0$, let
            \begin{equation}
                \Theta_R \leqdef \left\{\bb{p}\in (0,1)^d: \|\bb{p}\|_1 < 1 ~~ \text{and} ~~ \frac{\max\{p_1,\dots,p_d,q\}}{\min\{p_1,\dots,p_d,q\}} \leq R\right\}.
            \end{equation}
            Define the experiments
            \begin{alignat*}{6}
                    &\mathscr{P}
                    &&\leqdef &&~\{\PP_{N,\bb{p}}\}_{\bb{p}\in \Theta_R}, \quad &&\PP_{N,\bb{p}} ~\text{is the measure induced by } \mathrm{Multinomial}\hspace{0.2mm}(N,\bb{p}), \\
                    &\mathscr{Q}\hspace{-0.5mm}
                    &&\leqdef &&~\{\QQ_{N,\bb{p}}\}_{\bb{p}\in \Theta_R}, \quad &&\QQ_{N,\bb{p}} ~\text{is the measure induced by } \mathrm{Normal}_d(N \bb{p}, N \Sigma),
            \end{alignat*}
            where recall $\Sigma \leqdef \mathrm{diag}(\bb{p}) - \bb{p} \bb{p}^{\top}$.
            Then, we have the following bound on the Le Cam distance $\Delta(\mathscr{P},\mathscr{Q})$ between $\mathscr{P}$ and $\mathscr{Q}$,
            \begin{equation}\label{eq:thm:bound.deficiency.distance.bound}
                \Delta(\mathscr{P},\mathscr{Q}) \leqdef \max\{\delta(\mathscr{P},\mathscr{Q}),\delta(\mathscr{Q},\mathscr{P})\} \leq C_R \, \frac{d}{\sqrt{N}},
            \end{equation}
            where $C_R$ is a positive constant that depends only on $R$,
            \begin{equation}\label{eq:def:deficiency.one.sided}
                \begin{aligned}
                    \delta(\mathscr{P},\mathscr{Q})
                    &\leqdef \inf_{T_1} \sup_{\bb{p}\in \Theta_R} \bigg\|\int_{N \mathcal{S} \cap \N_0^d} T_1(\bb{k}, \cdot \, ) \, \PP_{N,\bb{p}}(\rd \bb{k}) - \QQ_{N,\bb{p}}\bigg\|, \\
                    \delta(\mathscr{Q},\mathscr{P})
                    &\leqdef \inf_{T_2} \sup_{\bb{p}\in \Theta_R} \bigg\|\PP_{N,\bb{p}} - \int_{\R^d} T_2(\bb{y}, \cdot \, ) \, \QQ_{N,\bb{p}}(\rd \bb{y})\bigg\|, \\
                \end{aligned}
            \end{equation}
            and the infima are taken, respectively, over all Markov kernels $T_1 : (N \mathcal{S} \cap \N_0^d) \times \mathscr{B}(\R^d) \to [0,1]$ and $T_2 : \R^d \times \mathscr{B}(N \mathcal{S} \cap \N_0^d) \to [0,1]$.
        \end{theorem}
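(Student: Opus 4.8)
The plan is to bound the two one-sided deficiencies in \eqref{eq:def:deficiency.one.sided} separately, in each case by exhibiting a single Markov kernel that does \emph{not} depend on $\bb{p}$ and that reduces the problem directly to the total-variation estimate of Lemma~\ref{lem:prelim.Carter}. Because a fixed kernel is used, passing to $\sup_{\bb{p}\in\Theta_R}$ and then to the infimum over kernels only costs an upper bound; and because no inductive dimension-halving is involved, there is no accumulation of error over the $\OO(\log d)$ steps of \cite{MR1922539}, which is precisely what permits the $\log d$ factor to be dropped.

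For $\delta(\mathscr{P},\mathscr{Q})$ I would choose $T_1$ to be the uniform-smearing kernel
\[
  T_1(\bb{k},B) \leqdef \int_{(-\frac{1}{2},\frac{1}{2})^d} \ind_B(\bb{k}+\bb{u}) \rd \bb{u}, \qquad B\in\mathscr{B}(\R^d),
\]
which is exactly the kernel built into the definition of $\widetilde{\PP}_{N,\bb{p}}$. Then $\int_{N\mathcal{S}\cap\N_0^d} T_1(\bb{k},\cdot)\,\PP_{N,\bb{p}}(\rd\bb{k}) = \widetilde{\PP}_{N,\bb{p}}$, so that $\delta(\mathscr{P},\mathscr{Q}) \le \sup_{\bb{p}\in\Theta_R} \|\widetilde{\PP}_{N,\bb{p}} - \QQ_{N,\bb{p}}\|$, and since $\max\{p_1,\dots,p_d,q\}/\min\{p_1,\dots,p_d,q\}\le R$ on $\Theta_R$, Lemma~\ref{lem:prelim.Carter} bounds the right-hand side by $C_R\,d/\sqrt{N}$.

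For $\delta(\mathscr{Q},\mathscr{P})$ the key structural observation is that coordinatewise nearest-integer rounding $\bb{y}\mapsto\lfloor\bb{y}\rceil$ inverts the smearing: because $\bb{U}\in(-\frac{1}{2},\frac{1}{2})^d$, we have $\lfloor\bb{K}+\bb{U}\rceil=\bb{K}$ almost surely, so the pushforward of $\widetilde{\PP}_{N,\bb{p}}$ under rounding is exactly $\PP_{N,\bb{p}}$. Applying the same rounding map to $\QQ_{N,\bb{p}}$ and invoking the data-processing inequality (total variation is non-increasing under a Markov kernel) gives
\[
  \|\PP_{N,\bb{p}} - \lfloor\QQ_{N,\bb{p}}\rceil\| \le \|\widetilde{\PP}_{N,\bb{p}} - \QQ_{N,\bb{p}}\| \le C_R\,d/\sqrt{N}
\]
uniformly over $\Theta_R$, again by Lemma~\ref{lem:prelim.Carter}. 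To turn rounding into a legitimate kernel $T_2$ valued in $\mathscr{B}(N\mathcal{S}\cap\N_0^d)$, I would redirect any mass whose rounding falls outside $N\mathcal{S}\cap\N_0^d$ to a fixed reference point $\bb{k}_0\in N\mathcal{S}\cap\N_0^d$.

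The one step requiring genuine care — and the main, if minor, obstacle — is controlling this redirection: $\|T_2\QQ_{N,\bb{p}} - \lfloor\QQ_{N,\bb{p}}\rceil\|$ is at most $2\,\PP(\lfloor\bb{Y}\rceil\notin N\mathcal{S}\cap\N_0^d)$ for $\bb{Y}\sim\mathrm{Normal}_d(N\bb{p},N\Sigma)$. On $\Theta_R$ the constraints $\min\{p_1,\dots,p_d,q\}\ge\max\{p_1,\dots,p_d,q\}/R\ge\{(d+1)R\}^{-1}$ force the mean $N\bb{p}$ to lie at distance of order $N/\{(d+1)R\}$ from every face of the simplex, so a union bound over the $d+1$ faces together with a Gaussian tail estimate (as in \eqref{eq:concentration.bound}) makes this probability exponentially small in $N$, hence negligible against $d/\sqrt{N}$. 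Combining the two directions, both $\delta(\mathscr{P},\mathscr{Q})$ and $\delta(\mathscr{Q},\mathscr{P})$ are at most $C_R\,d/\sqrt{N}$, and taking their maximum yields \eqref{eq:thm:bound.deficiency.distance.bound}.
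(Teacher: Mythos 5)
Your proposal is correct and follows essentially the same route as the paper: the uniform-smearing kernel $T_1^{\star}$ combined with Lemma~\ref{lem:prelim.Carter} for $\delta(\mathscr{P},\mathscr{Q})$, and nearest-integer rounding plus the data-processing inequality for $\delta(\mathscr{Q},\mathscr{P})$. The only difference is your extra redirection-of-mass step; the paper sidesteps it by applying the (suitably defined) rounding kernel $T_2^{\star}$ to both $T_1^{\star}\PP_{N,\bb{p}}$ and $\QQ_{N,\bb{p}}$ at once, so that the identity $T_2^{\star}(T_1^{\star}\PP_{N,\bb{p}})=\PP_{N,\bb{p}}$ and the contraction of total variation under a Markov kernel already give the bound without any Gaussian tail estimate.
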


        \begin{proof}
            By Lemma~\ref{lem:prelim.Carter}, we get the desired bound on $\delta(\mathscr{P},\mathscr{Q})$ by choosing the Markov kernel $T_1^{\star}$ that adds $\bb{U}$ to $\bb{K}$, namely
            \begin{equation}
                \begin{aligned}
                    T_1^{\star}(\bb{k},B) \leqdef \int_{(-\frac{1}{2},\frac{1}{2})^d} \ind_{B}(\bb{k} + \bb{u}) \rd \bb{u}, \quad \bb{k}\in N \mathcal{S} \cap \N_0^d, ~B\in \mathscr{B}(\R^d).
                \end{aligned}
            \end{equation}
            To get the bound on $\delta(\mathscr{Q},\mathscr{P})$, it suffices to consider a Markov kernel $T_2^{\star}$ that inverts the effect of $T_1^{\star}$, i.e.\ rounding off every components of $\bb{Y}\sim \mathrm{Normal}_d(N \bb{p}, N \Sigma)$ to the nearest integer.
            Then, as explained in Section 5 of \cite{MR1922539}, we get
            \begin{equation}
                \begin{aligned}
                    \delta(\mathscr{Q},\mathscr{P})
                    &\leq \bigg\|\PP_{N,\bb{p}} - \int_{\R^d} T_2^{\star}(\bb{y}, \cdot \, ) \, \QQ_{N,\bb{p}}(\rd \bb{y})\bigg\| \\
                    &= \bigg\|\int_{\R^d} T_2^{\star}(\bb{y}, \cdot \, ) \int_{N \mathcal{S} \cap \N_0^d} T_1^{\star}(\bb{k}, \rd \bb{y}) \, \PP_{N,\bb{p}}(\rd \bb{k}) - \int_{\R^d} T_2^{\star}(\bb{y}, \cdot \, ) \, \QQ_{N,\bb{p}}(\rd \bb{y})\bigg\| \\
                    &\leq \bigg\|\int_{N \mathcal{S} \cap \N_0^d} T_1^{\star}(\bb{k}, \cdot \, ) \, \PP_{N,\bb{p}}(\rd \bb{k}) - \QQ_{N,\bb{p}}\bigg\|,
                \end{aligned}
            \end{equation}
            and we get the same bound by Lemma~\ref{lem:prelim.Carter}.
        \end{proof}

        \newpage
        If we consider the following multivariate normal experiments with independent components

        \vspace{-3mm}
        \begin{alignat*}{6}
            &\widetilde{\mathscr{Q}}
            &&\leqdef &&~\{\widetilde{\QQ}_{N,\bb{p}}\}_{\bb{p}\in \Theta_R}, \quad &&\widetilde{\QQ}_{N,\bb{p}} ~\text{is the measure induced by } \mathrm{Normal}_d(N \bb{p}, N \mathrm{diag}(\bb{p})), \\
            &\mathscr{Q}^{\star}\hspace{-0.5mm}
            &&\leqdef &&~\{\QQ_{N,\bb{p}}^{\star}\}_{\bb{p}\in \Theta_R}, \quad &&\QQ_{N,\bb{p}}^{\star} ~\text{is the measure induced by } \mathrm{Normal}_d(\sqrt{N \bb{p}}, \mathrm{diag}(1/4,\dots,1/4)),
        \end{alignat*}

        \vspace{2mm}
        \noindent
        then \cite[Section 7]{MR1922539} also showed that
        \begin{equation}\label{eq:LeCam.distance.indep.normals}
            \Delta(\mathscr{Q},\widetilde{\mathscr{Q}}) \leq C_R \, \sqrt{\frac{d}{N}} \qquad \text{and} \qquad \Delta(\widetilde{\mathscr{Q}},\mathscr{Q}^{\star}) \leq C_R \, \frac{d}{\sqrt{N}},
        \end{equation}
        using a variance stabilizing transformation,
        with proper adjustments to the deficiencies in \eqref{eq:def:deficiency.one.sided}.

        \begin{corollary}\label{cor:main.theorem.consequence}
            With the same notation as in Theorem~\ref{thm:bound.deficiency.distance}, we have
            \begin{equation}
                \Delta(\mathscr{P},\widetilde{\mathscr{Q}}) \leq C_R \, \frac{d}{\sqrt{N}} \qquad \text{and} \qquad \Delta(\mathscr{P},\mathscr{Q}^{\star}) \leq C_R \, \frac{d}{\sqrt{N}},
            \end{equation}
            for a positive constant $C_R$ that depends only on $R$.
        \end{corollary}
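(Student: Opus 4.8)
The plan is to obtain both bounds by composing the estimates already at our disposal through the triangle inequality for the Le Cam distance. Recall that $\Delta$ is a pseudometric on the collection of statistical experiments indexed by the same parameter set $\Theta_R$: this follows because the one-sided deficiencies satisfy $\delta(\mathscr{E}_1,\mathscr{E}_3) \leq \delta(\mathscr{E}_1,\mathscr{E}_2) + \delta(\mathscr{E}_2,\mathscr{E}_3)$, which is seen by composing the near-optimal Markov kernels realizing each of the two deficiencies on the right (a composition of randomizations is again a randomization). Taking the maximum over the two directions then yields $\Delta(\mathscr{E}_1,\mathscr{E}_3) \leq \Delta(\mathscr{E}_1,\mathscr{E}_2) + \Delta(\mathscr{E}_2,\mathscr{E}_3)$.

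With this in hand, I would simply chain the three experiments. For the first claim,
\[
    \Delta(\mathscr{P},\widetilde{\mathscr{Q}}) \leq \Delta(\mathscr{P},\mathscr{Q}) + \Delta(\mathscr{Q},\widetilde{\mathscr{Q}}) \leq C_R \, \frac{d}{\sqrt{N}} + C_R \, \sqrt{\frac{d}{N}},
\]
using Theorem~\ref{thm:bound.deficiency.distance} for the first term and the left-hand bound in \eqref{eq:LeCam.distance.indep.normals} for the second. For the second claim, I would extend the chain by one more link,
\[
    \Delta(\mathscr{P},\mathscr{Q}^{\star}) \leq \Delta(\mathscr{P},\widetilde{\mathscr{Q}}) + \Delta(\widetilde{\mathscr{Q}},\mathscr{Q}^{\star}) \leq C_R \, \frac{d}{\sqrt{N}} + C_R \, \frac{d}{\sqrt{N}},
\]
invoking the right-hand bound in \eqref{eq:LeCam.distance.indep.normals} for the last term.

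The only point requiring care---and it is a very mild one---is to confirm that every term on the right collapses into a single $C_R \, d / \sqrt{N}$. Since $d \geq 1$, we have $\sqrt{d/N} = \sqrt{d}/\sqrt{N} \leq d/\sqrt{N}$, so the weaker rate in the first chain is absorbed, and finite sums of terms of order $d/\sqrt{N}$ are themselves of that order (after relabeling the constant $C_R$). Thus there is no genuine obstacle here: all of the analytic difficulty was already spent in establishing Lemma~\ref{lem:prelim.Carter} and Theorem~\ref{thm:bound.deficiency.distance}, and the corollary reduces to a bookkeeping exercise in the triangle inequality together with the rate comparison $\sqrt{d/N} = \OO(d/\sqrt{N})$.
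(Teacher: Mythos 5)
Your proof is correct and follows essentially the same route as the paper, which simply invokes Theorem~\ref{thm:bound.deficiency.distance}, Equation~\eqref{eq:LeCam.distance.indep.normals}, and the triangle inequality for the pseudometric $\Delta(\cdot,\cdot)$. Your additional remarks on why $\Delta$ satisfies the triangle inequality and on absorbing $\sqrt{d/N}$ into $d/\sqrt{N}$ (since $d \geq 1$) are accurate fillings-in of details the paper leaves implicit.
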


        \begin{proof}
            This is a direct consequence of Theorem~\ref{thm:bound.deficiency.distance}, Equation \eqref{eq:LeCam.distance.indep.normals} and the triangle inequality for the pseudometric $\Delta(\cdot , \cdot)$.
        \end{proof}

    \subsection{Other potential applications}\label{sec:other.applications}

        As \cite{MR538319} did for the binomial distribution, it should be possible to derive a finely tuned continuity correction for the survival function of the multinomial distribution by using the local limit theorem in Theorem~\ref{thm:p.k.expansion}.
        However, in the multidimensional setting ($d \geq 2$), the added liberty on the choice of the correction in each dimension poses non trivial numerical difficulties that need to be resolved.
        This point is left for future research.

        \vspace{3mm}
        It should be mentioned that local limit theorems such as the one in Theorem~\ref{thm:p.k.expansion} can be used for many other purposes; the three examples above are only pointers for new research.
        For instance, in \cite{MR750392}, the authors originally used their approximation of multinomial probabilities to obtain expansions for the cumulative distribution function of the following three statistics:
        \begin{equation}
            \begin{tabular}{l}
                $\bullet$ ~ Pearson's chi-square statistic, $\sum_{i=1}^{d+1} (K_i - N p_i)^2 / (N p_i)$, \\[1mm]
                $\bullet$ ~ the log-likelihood ratio statistic, $2 \sum_{i=1}^{d+1} K_i \log(K_i / (N p_i))$, \\[1mm]
                $\bullet$ ~ the Freeman-Tukey statistic, $4 \sum_{i=1}^{d+1} (\sqrt{K_i} - \sqrt{N p_i})^2$,
            \end{tabular}
        \end{equation}
        where $\bb{K}\sim \mathrm{Multinomial}\hspace{0.2mm}(N,\bb{p})$, $K_{d+1} \leqdef N - \|\bb{K}\|_1$ and $p_{d+1}\leqdef q = 1 - \|\bb{p}\|_1$.
        Some of these results were extended by \cite{MR752006} for the convergence of the more general power divergence statistic
        \begin{equation}
            T_{\lambda}(\bb{K}) = \frac{2}{\lambda (\lambda + 1)} \sum_{i=1}^{d+1} K_i \left[\Big(\frac{K_i}{N p_i}\Big)^{\lambda} - 1\right], \quad \lambda\in \R,
        \end{equation}
        to the chi-square distribution.
        Some lapses in the expansions of \cite{MR750392} and \cite{MR752006}, regarding the rate of convergence of the chi-square approximation, were pointed out and fixed in \cite{MR2499200} (see also \cite{MR3079145}).

        \vspace{3mm}
        Other applications of local limit theorems abound in the literature.
        As mentioned in \cite{MR3825458,Ouimet2019phd}, the special case of the multinomial distribution is worth investigating because there are instances in practice where the distribution that we would like to estimate lives naturally on the d-dimensional simplex. One example is the Dirichlet distribution, which is the conjugate prior of the multinomial distribution in Bayesian estimation. See for example \cite{Lange_1995} for an application in the context of allele frequency estimation in genetics.

\section{Proofs}\label{sec:proofs}

\begin{proof}[Proof of Theorem~\ref{thm:p.k.expansion}]
    Using Stirling's formula,
    \begin{equation}
        \log k! = \frac{1}{2} \log(2\pi) + (k + \tfrac{1}{2}) \log k - k + \frac{1}{12k} + \OO(k^{-3}),
    \end{equation}
    see e.g.\ \cite[p.257]{MR0167642}, and taking logarithms in \eqref{eq:multinomial.pdf}, we obtain
    \begin{align}
        \log p_N(\bb{k})
        &= \log N! - \sum_{i=1}^d \log k_i! - \log (N - \|\bb{k}\|_1)! + \sum_{i=1}^d k_i \log p_i + (N - \|\bb{k}\|_1) \log q \notag \\
        &= -\frac{d}{2} \log(2\pi) - \frac{d}{2} \log N - \frac{1}{2} \sum_{i=1}^d \log p_i - \frac{1}{2} \log q \notag \\
        &\quad- \sum_{i=1}^d (k_i + \tfrac{1}{2}) \log\Big(\frac{k_i}{N p_i}\Big) - (N - \|\bb{k}\|_1 + \tfrac{1}{2}) \log\Big(\frac{N - \|\bb{k}\|_1}{N q}\Big) \notag \\
        &\quad+ \frac{1}{12N} \bigg\{1 - \sum_{i=1}^d \frac{N}{k_i} \hspace{-0.5mm}-\hspace{0.5mm} \frac{N}{N - \|\bb{k}\|_1}\bigg\} + \OO\bigg(\frac{1}{N^3}\bigg\{1 + \sum_{i=1}^d \frac{N^3}{k_i^3} + \frac{N^3}{(N - \|\bb{k}\|_1)^3}\bigg\}\bigg).
    \end{align}
    After some algebraic manipulations, we get
    \begin{align}\label{eq:log.p.before}
        \log p_N(\bb{k})
        &= -\log \sqrt{(2\pi N)^d \, p_1 p_2 \dots p_d \hspace{0.3mm} q} \notag \\
        &\quad- \sum_{i=1}^d k_i \log\Big(\frac{k_i}{N p_i}\Big) - (N - \|\bb{k}\|_1) \log\Big(\frac{N - \|\bb{k}\|_1}{N q}\Big) \notag \\
        &\quad- \frac{1}{2} \sum_{i=1}^d \log\Big(\frac{k_i}{N p_i}\Big) - \frac{1}{2} \log\Big(\frac{N - \|\bb{k}\|_1}{N q}\Big)\notag \\
        &\quad+ \frac{1}{12N} \bigg\{1 - \sum_{i=1}^d \frac{1}{p_i} \Big(\frac{k_i}{N p_i}\Big)^{-1} \hspace{-0.5mm}-\hspace{0.5mm} \frac{1}{q} \Big(\frac{N - \|\bb{k}\|_1}{N q}\Big)^{-1}\bigg\} \notag \\
        &\quad+ \OO\bigg(\frac{1}{N^3}\bigg\{1 + \sum_{i=1}^d \frac{1}{p_i^3} \Big(\frac{k_i}{N p_i}\Big)^{-3} + \frac{1}{q^3} \Big(\frac{N - \|\bb{k}\|_1}{N q}\Big)^{-3}\bigg\}\bigg).
    \end{align}
    By writing $k_i = N p_i + (k_i - N p_i)$ and $N - \|\bb{k}\|_1 = N q - \sum_{i=1}^d (k_i - N p_i)$ in \eqref{eq:log.p.before}, we deduce
    \begin{align}\label{eq:log.p}
        \log p_N(\bb{k})
        &= -\log \sqrt{(2\pi N)^d \, p_1 p_2 \dots p_d \hspace{0.3mm} q} \notag \\[0mm]
        &\quad- \, \underbrace{\sum_{i=1}^d N p_i \Big(1 + \frac{\delta_{i,k_i}}{\sqrt{N} p_i}\Big) \log\Big(1 + \frac{\delta_{i,k_i}}{\sqrt{N} p_i}\Big)}_{\reqdef \, (A)} \notag \\
        &\quad- \, \underbrace{N q \Big(1 - \sum_{i=1}^d \frac{\delta_{i,k_i}}{\sqrt{N} q}\Big) \log\Big(1 - \sum_{i=1}^d \frac{\delta_{i,k_i}}{\sqrt{N} q}\Big)}_{\reqdef \, (B)} \notag \\
        &\quad- \frac{1}{2} \sum_{i=1}^d \log\Big(1 + \frac{\delta_{i,k_i}}{\sqrt{N} p_i}\Big) - \frac{1}{2} \log\Big(1 - \sum_{i=1}^d \frac{\delta_{i,k_i}}{\sqrt{N} q}\Big) \notag \\[2mm]
        &\quad+ \frac{1}{12N} \bigg\{1 - \sum_{i=1}^d \frac{1}{p_i} \Big(1 + \frac{\delta_{i,k_i}}{\sqrt{N} p_i}\Big)^{-1} \hspace{-0.5mm}-\hspace{0.5mm} \frac{1}{q} \Big(1 - \sum_{i=1}^d \frac{\delta_{i,k_i}}{\sqrt{N} q}\Big)^{-1}\bigg\} + \OO_{d,\bb{p},\eta}(N^{-3}).
    \end{align}
    By applying the following Taylor expansions, valid for $\max\{|x|, |\sum_{i=1}^d x_i|\} < \widetilde{\eta} < 1$,
    \begin{equation}
        \begin{aligned}
            &(1 + x) \log(1 + x) = x + \frac{x^2}{2} - \frac{x^3}{6} + \frac{x^4}{12} + \OO((1 - \widetilde{\eta})^{-4} x^5), \\
            &(1 - \sum_{i=1}^d x_i) \log(1 - \sum_{i=1}^d x_i) \\
            &\qquad= -\sum_{i=1}^d x_i + \sum_{i\hspace{-0.1mm},\hspace{0.1mm} j = 1}^d \frac{x_i x_j}{2} + \hspace{-1mm} \sum_{i\hspace{-0.1mm},\hspace{0.1mm} j\hspace{-0.1mm},\hspace{0.1mm} \ell = 1}^d \hspace{-2mm} \frac{x_i x_j x_{\ell}}{6} + \hspace{-2.5mm} \sum_{i\hspace{-0.1mm},\hspace{0.1mm} j\hspace{-0.1mm},\hspace{0.1mm} \ell \hspace{-0.1mm}, \hspace{-0.1mm} m = 1}^d \hspace{-3mm} \frac{x_i x_j x_{\ell} x_m}{12} + \OO_d((1 - \widetilde{\eta})^{-4} \|\bb{x}\|_1^5),
        \end{aligned}
    \end{equation}
    in \eqref{eq:log.p}, we have, respectively,
    \begin{align}\label{eq:log.p.eq.1}
        \mathrm{(A)}
        &= \sum_{i=1}^d N p_i
            \left\{\hspace{-1mm}
            \begin{array}{l}
                \Big(\frac{\delta_{i,k_i}}{\sqrt{N} p_i}\Big) + \frac{1}{2} \Big(\frac{\delta_{i,k_i}}{\sqrt{N} p_i}\Big)^2 - \frac{1}{6} \Big(\frac{\delta_{i,k_i}}{\sqrt{N} p_i}\Big)^3 \\[2mm]
                + \frac{1}{12} \Big(\frac{\delta_{i,k_i}}{\sqrt{N} p_i}\Big)^4 + \OO_{p_i,\eta}\Big(\big(\frac{\delta_{i,k_i}}{\sqrt{N}}\big)^5\Big)
            \end{array}
            \hspace{-1mm}\right\} \notag \\[1mm]
        &= \sum_{i=1}^d (k_i - N p_i) + \sum_{i=1}^d \frac{1}{2} \delta_{i,k_i}^2 \bigg\{\frac{1}{p_i} - \frac{\delta_{i,k_i}}{3\sqrt{N} p_i^2} + \frac{\delta_{i,k_i}^2}{6 N p_i^3} + \OO_{p_i,\eta}\bigg(\Big(\frac{\delta_{i,k_i}}{\sqrt{N}}\Big)^3\bigg)\bigg\},
    \end{align}
    and
    \begin{align}\label{eq:log.p.eq.2}
        \mathrm{(B)}
        &= N q
            \left\{\hspace{-1mm}
            \begin{array}{l}
                -\sum_{i=1}^d \frac{\delta_{i,k_i}}{\sqrt{N} q} + \frac{1}{2} \sum_{i\hspace{-0.1mm},\hspace{0.1mm} j = 1}^d \frac{\delta_{i,k_i}}{\sqrt{N} q} \frac{\delta_{j,k_j}}{\sqrt{N} q} + \frac{1}{6} \sum_{i\hspace{-0.1mm},\hspace{0.1mm} j\hspace{-0.1mm},\hspace{0.1mm} \ell = 1}^d \frac{\delta_{i,k_i}}{\sqrt{N} q} \frac{\delta_{j,k_j}}{\sqrt{N} q} \frac{\delta_{\ell,k_{\ell}}}{\sqrt{N} q} \\[2mm]
                + \frac{1}{12} \sum_{i\hspace{-0.1mm},\hspace{0.1mm} j\hspace{-0.1mm},\hspace{0.1mm} \ell \hspace{-0.1mm}, m = 1}^d \frac{\delta_{i,k_i}}{\sqrt{N} q} \frac{\delta_{j,k_j}}{\sqrt{N} q} \frac{\delta_{\ell,k_{\ell}}}{\sqrt{N} q} \frac{\delta_{m,k_m}}{\sqrt{N} q} + \OO_{d,\bb{p},\eta}\Big(\big(\frac{\|\bb{\delta}_{\bb{k}}\|_1}{\sqrt{N}}\big)^5\Big)
            \end{array}
            \hspace{-1mm}\right\} \notag \\[1mm]
        &= -\sum_{i=1}^d (k_i - N p_i) + \sum_{i\hspace{-0.1mm},\hspace{0.1mm} j = 1}^d \frac{1}{2} \delta_{i,k_i} \delta_{j,k_j} \left\{\hspace{-1mm}
                \begin{array}{l}
                    \frac{1}{q} + \sum_{\ell=1}^d \frac{\delta_{\ell,k_\ell}}{3\sqrt{N} q^2} \\
                    + \sum_{\ell \hspace{-0.1mm},\hspace{-0.1mm} m = 1}^d \frac{\delta_{\ell,k_\ell} \delta_{m,k_m}}{6N q^3} + \OO_{d,\bb{p},\eta}\Big(\big(\frac{\|\bb{\delta}_{\bb{k}}\|_1}{\sqrt{N}}\big)^3\Big)
                \end{array}
                \hspace{-1mm}\right\}.
    \end{align}
    Now, putting \eqref{eq:log.p.eq.1} and \eqref{eq:log.p.eq.2} back into \eqref{eq:log.p}, and using the conditions \eqref{eq:thm:p.k.expansion.condition}, we find
    \begin{equation}
        \begin{aligned}
            \log p_N(\bb{k})
            &= -\log \sqrt{(2\pi N)^d \, p_1 p_2 \dots p_d \hspace{0.3mm} q} \, - \sum_{i\hspace{-0.1mm},\hspace{0.1mm} j = 1}^d \frac{1}{2} \delta_{i,k_i} \delta_{j,k_j} \Big\{\Sigma_{ij}^{-1} + S_{N,ij}\Big\} \\[-1mm]
            &\quad- \frac{1}{2} \sum_{i=1}^d \log\Big(1 + \frac{\delta_{i,k_i}}{\sqrt{N} p_i}\Big) - \frac{1}{2} \log\Big(1 - \sum_{i=1}^d \frac{\delta_{i,k_i}}{\sqrt{N} q}\Big) \\
            &\quad+ \frac{1}{12N} \big\{1 - \sum_{i=1}^d p_i^{-1} - q^{-1}\big\} + \OO_{d,\bb{p},\eta}\bigg(\frac{1 + \|\bb{\delta}_{\bb{k}}\|_1}{N^{3/2}}\bigg),
        \end{aligned}
    \end{equation}
    where the $d \times d$ matrices $\Sigma^{-1}$ and $S_N$ have the $(i,j)$ components:
    \begin{equation}\label{eq:def.M.and.S}
        \begin{aligned}
            \Sigma_{ij}^{-1}
            &\leqdef \frac{1}{p_i} \bb{1}_{\{i = j\}} + \frac{1}{q}, \\
            S_{N,ij}
            &\leqdef \sum_{\ell=1}^d \frac{\delta_{\ell,k_\ell}}{3\sqrt{N}} \bigg\{\frac{-1}{p_i^2} \bb{1}_{\{i = j = \ell\}} + \frac{1}{q^2}\bigg\} \\
            &\quad+ \sum_{\ell \hspace{-0.1mm},\hspace{-0.1mm} m = 1}^d \frac{\delta_{\ell,k_\ell} \delta_{m,k_m}}{6N} \bigg\{\frac{1}{p_i^3} \bb{1}_{\{i = j = \ell = m\}} + \frac{1}{q^3}\bigg\} + \OO_{d,\bb{p},\eta}\bigg(\frac{\|\bb{\delta}_{\bb{k}}\|_1^3}{N^{3/2}}\bigg).
        \end{aligned}
    \end{equation}
    Hence,
    \begin{equation}\label{eq:p.product.exponential}
        \begin{aligned}
            p_N(\bb{k})
            &= \frac{\exp\Big(-\frac{1}{2} \bb{\delta}_{\bb{k}}^{\top} (\Sigma^{-1} + S_N) \, \bb{\delta}_{\bb{k}}\Big)}{\sqrt{(2\pi N)^d \, p_1 p_2 \dots p_d \hspace{0.3mm} q}} \cdot \frac{\prod_{i=1}^d \Big(1 + \frac{\delta_{i,k_i}}{\sqrt{N} p_i}\Big)^{-1/2}}{\Big(1 - \sum_{i=1}^d \frac{\delta_{i,k_i}}{\sqrt{N} q}\Big)^{1/2}} \\
            &\quad\cdot \bigg\{1 + \frac{1}{12N} \big\{1 - \sum_{i=1}^d p_i^{-1} - q^{-1}\big\} + \OO_{d,\bb{p},\eta}\bigg(\frac{1 + \|\bb{\delta}_{\bb{k}}\|_1}{N^{3/2}}\bigg)\bigg\}.
        \end{aligned}
    \end{equation}
    Using the following Taylor expansions, valid for $|y| \leq B < \infty$ and $|x| < \widetilde{\eta} < 1$,
    \vspace{-1mm}
    \begin{equation}
        \begin{aligned}
            e^{-y} &= 1 - y + \frac{y^2}{2} + \OO(e^B y^3), \\[1mm]
            (1 + x)^{-1/2} &= 1 - \frac{x}{2} + \frac{3x^2}{8} + \OO((1 - \widetilde{\eta})^{-7/2} x^3), \\
            (1 - \sum_{i=1}^d x_i)^{-1/2} &= 1 + \sum_{i=1}^d \frac{x_i}{2} + \sum_{i \hspace{-0.1mm},\hspace{-0.1mm} j = 1}^d \frac{3 x_i x_j}{8} + \OO_d((1 - \widetilde{\eta})^{-7/2} \|\bb{x}\|_1^3),
        \end{aligned}
    \end{equation}
    in \eqref{eq:p.product.exponential}, and the function $\phi_{\Sigma}$ from \eqref{eq:phi.M}, we find that
    \begin{equation}\label{eq:p.product.exponential.2}
        \begin{aligned}
            p_N(\bb{k})
            &= N^{-d/2} \phi_{\Sigma}(\bb{\delta}_{\bb{k}}) \cdot \bigg\{1 - \tfrac{1}{2} \bb{\delta}_{\bb{k}}^{\top} S_N \, \bb{\delta}_{\bb{k}} + \tfrac{1}{8} (\bb{\delta}_{\bb{k}}^{\top} S_N \, \bb{\delta}_{\bb{k}})^2 + \OO_{d,\bb{p},\eta}\bigg(\frac{\|\bb{\delta}_{\bb{k}}\|_1^9}{N^{3/2}}\bigg)\bigg\} \\[1mm]
            &\quad \cdot \left\{\hspace{-1mm}
            \begin{array}{l}
                1 - \sum_{i=1}^d \frac{\delta_{i,k_i}}{2\sqrt{N}} \big\{\frac{1}{p_i} - \frac{1}{q}\big\} \\[2mm]
                + \sum_{i \hspace{-0.1mm},\hspace{-0.1mm} j = 1}^d \frac{\delta_{i,k_i} \delta_{j,k_j}}{8N} \big\{\frac{3}{p_i^2} \bb{1}_{\{i = j\}} + \frac{2}{p_i p_j} \bb{1}_{\{i < j\}} - \frac{2}{p_i q} + \frac{3}{q^2}\big\} + \OO_{d,\bb{p},\eta}\Big(\frac{\|\bb{\delta}_{\bb{k}}\|_1^3}{N^{3/2}}\Big)
            \end{array}
            \hspace{-1mm}\right\} \\
            &\quad\cdot \bigg\{1 + \frac{1}{12N} \big\{1 - \sum_{i=1}^d p_i^{-1} - q^{-1}\big\} + \OO_{d,\bb{p},\eta}\bigg(\frac{1 + \|\bb{\delta}_{\bb{k}}\|_1}{N^{3/2}}\bigg)\bigg\}.
        \end{aligned}
    \end{equation}
    By expanding the product of the braces, we get \eqref{eq:LLT.order.2}.
\end{proof}

Before proving Corollary~\ref{cor:approx.proba}, we show that the sum of all $p_N(\bb{k})$'s for which $\bb{k}$ is outside the bulk is negligible.
This is just a specific example of the more general concentration of measure phenomenon, see e.g.\ \cite{MR1849347}.

\begin{lemma}\label{lem:outside.bulk}
    Pick any $\eta\in (0,1)$ and recall the bulk $B_{N\hspace{-0.5mm},\bb{p}}(\eta)$ from \eqref{eq:thm:p.k.expansion.condition}.
    Then,
    \begin{equation}\label{eq:lem:outside.bulk}
        \sum_{\substack{\bb{k}\in \N_0^d\backslash B_{N\hspace{-0.5mm},\bb{p}}(\eta) \\ \|\bb{k}\|_1 \leq N}} \hspace{-1mm}p_N(\bb{k}) = \OO(e^{-\alpha N^{1/3}}), \qquad \text{as $N\to \infty$,}
    \end{equation}
    for some small enough constant $\alpha = \alpha(d,\bb{p},\eta) > 0$.
\end{lemma}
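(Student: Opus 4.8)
The plan is to recognize that the left-hand side of \eqref{eq:lem:outside.bulk} is exactly $\PP(\bb{K}\in B_{N\hspace{-0.5mm},\bb{p}}^c(\eta))$ for $\bb{K}\sim \mathrm{Multinomial}\hspace{0.2mm}(N,\bb{p})$, since the sum ranges over the whole support of the multinomial that lies outside the bulk. By the definition in \eqref{eq:thm:p.k.expansion.condition}, being outside the bulk means that at least one of the $d+1$ normalized deviations exceeds $\eta N^{-1/3}$ in absolute value, so a union bound gives
\[
    \PP(\bb{K}\in B_{N\hspace{-0.5mm},\bb{p}}^c(\eta)) \leq \sum_{i=1}^d \PP\big(|K_i - N p_i| > \eta p_i N^{2/3}\big) + \PP\big(\big|\|\bb{K}\|_1 - N(1-q)\big| > \eta q N^{2/3}\big),
\]
where I have used that $\frac{\delta_{i,k_i}}{\sqrt{N} p_i} = \frac{k_i - N p_i}{N p_i}$ and $\sum_{i=1}^d \frac{\delta_{i,k_i}}{\sqrt{N} q} = \frac{\|\bb{k}\|_1 - N(1 - q)}{N q}$, so that the normalized conditions of size $\eta N^{-1/3}$ translate into raw-count deviations of order $N^{2/3}$.

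The key observation is that each coordinate is binomially distributed: $K_i\sim \mathrm{Binomial}\hspace{0.2mm}(N,p_i)$ for $i\in\{1,\dots,d\}$, while $N - \|\bb{K}\|_1 \sim \mathrm{Binomial}\hspace{0.2mm}(N,q)$, so that $\|\bb{K}\|_1 - N(1-q) = -\big((N - \|\bb{K}\|_1) - N q\big)$ is itself a centered binomial deviation. I would then apply to each of the $d+1$ terms the same Bernstein (Chernoff-type) inequality for the binomial distribution already used in \eqref{eq:concentration.bound} during the proof of Lemma~\ref{lem:prelim.Carter} (there with $\eta = 1/2$). With a deviation threshold of order $N^{2/3}$ and variance of order $N$, the Bernstein exponent is of order $(N^{2/3})^2 / N = N^{1/3}$, so each term is bounded by $2\exp(-c_i N^{1/3})$ for a constant $c_i > 0$ depending on $\eta$ and on the relevant weight ($p_i$ or $q$).

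To conclude, I would take any $\alpha\in(0,\min\{c_1,\dots,c_d,c_{d+1}\})$ and absorb the finite number $d+1$ of summands, together with the factor $2$, into the implied $\OO(\cdot)$ constant, which yields $\sum_{\bb{k}\in\N_0^d\backslash B_{N\hspace{-0.5mm},\bb{p}}(\eta),\, \|\bb{k}\|_1 \leq N} p_N(\bb{k}) = \OO(e^{-\alpha N^{1/3}})$ as claimed, with $\alpha = \alpha(d,\bb{p},\eta)$. There is no serious obstacle here: this is a routine concentration-of-measure estimate, and the only point requiring care is the bookkeeping that converts the normalized bulk thresholds into raw deviations of size $\eta p_i N^{2/3}$ (resp.\ $\eta q N^{2/3}$), after which the rate $N^{1/3}$ emerges directly from the quadratic-over-linear exponent in the binomial tail bound.
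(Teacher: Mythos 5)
Your proposal is correct and follows essentially the same route as the paper: identify the sum as $\PP(\bb{K}\in B_{N\hspace{-0.5mm},\bb{p}}^c(\eta))$, apply a union bound over the $d+1$ marginal binomial coordinates (including $N-\|\bb{K}\|_1\sim\mathrm{Binomial}(N,q)$), translate the normalized thresholds into raw deviations of size $\eta p_i N^{2/3}$, and invoke a sub-Gaussian concentration bound to get the exponent of order $N^{1/3}$. The only cosmetic difference is that the paper cites Azuma's inequality where you cite Bernstein's; both yield the same $(N^{2/3})^2/N = N^{1/3}$ rate.
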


\begin{proof}[Proof of Lemma~\ref{lem:outside.bulk}]
    Notice that if $\bb{k}\in \N_0^d \backslash B_{N\hspace{-0.5mm},\bb{p}}(\eta)$, then at least one component $k_i$, or $N - \|\bb{k}\|_1$, deviates significantly from $N p_i$, or $N q$.
    Therefore, if $\xi_i\sim \text{Binomial}\hspace{0.2mm}(N,p_i)$ for all $i\in \{1,2,\dots,d+1\}$, where $p_{d+1} \leqdef q$, then a union bound followed by Azuma's inequality \cite[Theorem 1.3.1]{MR1422018} yields
    \vspace{-2mm}
    \begin{equation}
        \sum_{\substack{\bb{k}\in \N_0^d\backslash B_{N\hspace{-0.5mm},\bb{p}}(\eta) \\ \|\bb{k}\|_1 \leq N}} \hspace{-1mm}p_N(\bb{k})
        \leq \sum_{i=1}^{d+1} \PP(|\xi_i - N p_i| > \eta \, p_i \hspace{0.3mm} N^{2/3})
        \leq \sum_{i=1}^{d+1} 2 \, e^{- \eta^2 p_i^2 N^{1/3} / 2}.
    \end{equation}
    This ends the proof.
\end{proof}

\begin{proof}[Proof of Corollary~\ref{cor:approx.proba}]
    For any $\bb{y}_0\in \R^d$, we have the Taylor expansion
    \begin{equation}\label{eq:Taylor.phi.Sigma}
        \begin{aligned}
            \phi_{\Sigma}(\bb{y})
            &= \phi_{\Sigma}(\bb{y}_0) + \phi_{\Sigma}'(\bb{y}_0)^{\top} (\bb{y} - \bb{y}_0) + \tfrac{1}{2} (\bb{y} - \bb{y}_0)^{\top} \phi_{\Sigma}''(\bb{y}_0) (\bb{y} - \bb{y}_0) \\[1mm]
            &\qquad+ \OO_{d,\bb{p}}(\phi_{\Sigma}(\bb{y}_0) \|\bb{y} - \bb{y}_0\|_1^3).
        \end{aligned}
    \end{equation}
    If we take $\bb{y}_0 = \bb{\delta}_{\bb{k}}$ and integrate on $\mathcal{H}_{\bb{k}} = [\delta_{1,\scriptscriptstyle k_1 - \frac{1}{2}},\delta_{1,\scriptscriptstyle k_1 + \frac{1}{2}}] \times \dots \times [\delta_{d,\scriptscriptstyle k_d - \frac{1}{2}},\delta_{d,\scriptscriptstyle k_d + \frac{1}{2}}]$, the first and third order derivatives and the second order mixed derivatives ($i \neq j$) disappear because of the symmetry.
    We obtain
    \begin{align}\label{eq:Cressie.generalization.eq.1}
        \int_{\mathcal{H}_{\bb{k}}} \hspace{-1mm} \phi_{\Sigma}(\bb{y}) \rd \bb{y}
        &= N^{-d/2} \phi_{\Sigma}(\bb{\delta}_{\bb{k}}) \cdot \left\{\hspace{-1mm}
            \begin{array}{l}
                1 + \frac{1}{24N} \sum_{i=1}^d \big\{\big([\Sigma^{-1} \bb{\delta}_{\bb{k}}]_i\big)^2 - [\Sigma^{-1}]_{ii}\big\} \\[1.5mm]
                + \OO_{d,\bb{p}}\Big(\frac{(1 + \|\bb{\delta}_{\bb{k}}\|_1)^4}{N^2}\Big)
            \end{array}
            \hspace{-1mm}\right\}.
    \end{align}
    Therefore, for any fixed $\eta\in (0,1)$, say $\eta = 1/2$,
    \begin{equation}
        \begin{aligned}
            &\sum_{\substack{\bb{k}\in A \\ \|\bb{k}\|_1 \leq N}} \hspace{-1mm} p_N(\bb{k}) - \int_{\mathcal{H}_A} \hspace{-1mm} \phi_{\Sigma}(\bb{y}) \rd \bb{y} \\
            &\qquad= \sum_{\substack{\bb{k}\in A \backslash B_{N\hspace{-0.5mm},\bb{p}}(\eta) \\ \|\bb{k}\|_1 \leq N}} \hspace{-1mm} \big(p_N(\bb{k}) - N^{-d/2} \phi_{\Sigma}(\bb{\delta}_{\bb{k}})\big) + \sum_{\substack{\bb{k}\in A \cap B_{N\hspace{-0.5mm},\bb{p}}(\eta) \\ \|\bb{k}\|_1 \leq N}} \hspace{-1mm} \big(p_N(\bb{k}) - N^{-d/2} \phi_{\Sigma}(\bb{\delta}_{\bb{k}})\big) \\
            &\qquad\qquad- \frac{1}{24N} \sum_{i=1}^d \sum_{\substack{\bb{k}\in A \\ \|\bb{k}\|_1 \leq N}} \hspace{-1mm} \big\{\big([\Sigma^{-1} \bb{\delta}_{\bb{k}}]_i\big)^2 - [\Sigma^{-1}]_{ii}\big\} \, N^{-d/2} \phi_{\Sigma}(\bb{\delta}_{\bb{k}}) + \OO_{d,\bb{p}}(N^{-2}).
        \end{aligned}
    \end{equation}
    The first sum on the right-hand side is exponentially small in $N^{1/3}$ by Lemma~\ref{lem:outside.bulk} (and an analogous estimate for the multivariate normal distribution), and the terms in the second sum are estimated using Theorem~\ref{thm:p.k.expansion}.
    The conclusion follows.
\end{proof}

\begin{appendices}
\section{Technical lemmas}

Below are the joint central moments (up to three) of the multinomial distribution.
These moments were obtained in \cite{arXiv:2006.09059} by differentiating the moment generating function, cf.\ \cite{Ouimet_2021_multinomial_moments}.
This lemma is used to estimate the $\asymp N^{-1}$ errors in \eqref{eq:estimate.I.begin} of the proof of Lemma~\ref{lem:prelim.Carter}, and also as a preliminary result for the proof of Lemma~\ref{lem:Leblanc.2012.boundary.Lemma.1.with.set.A} below.

\begin{lemma}[Joint central moments $1$ to $3$]\label{lem:Leblanc.2012.boundary.Lemma.1}
    Let $\bb{p}\in (0,1)^d$ be such that $\|\bb{p}\|_1 < 1$.
    If $\bb{\xi} = (\xi_1,\xi_2,\dots,\xi_d)\sim \mathrm{Multinomial}\hspace{0.2mm}(N,\bb{p})$ according to \eqref{eq:multinomial.pdf}, then, for all $i,j,\ell\in \{1,2,\dots,d\}$,
    \begin{align}
        &\EE\big[\xi_i - N p_i\big] = 0, \label{eq:thm:central.moments.eq.1} \\[2mm]
        &\EE\big[(\xi_i - N p_i)(\xi_j - N p_j)\big] = N \, (p_i \ind_{\{i = j\}} - p_i p_j), \label{eq:thm:central.moments.eq.2} \\[2mm]
        &\EE\big[(\xi_i - N p_i)(\xi_j - N p_j)(\xi_{\ell} - N p_{\ell})\big] \label{eq:thm:central.moments.eq.3} \\[1mm]
        &\hspace{3mm}= N \big(2 p_i p_j p_{\ell} - \ind_{\{i = j\}} p_i p_{\ell} - \ind_{\{j = \ell\}} p_i p_j - \ind_{\{i = \ell\}} p_j p_{\ell} + \ind_{\{i = j = \ell\}} p_i\big). \notag
    \end{align}
\end{lemma}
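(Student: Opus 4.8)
The plan is to compute the joint central moments directly from the moment generating function (MGF) of the multinomial distribution, since this is the cleanest route to the explicit formulas in \eqref{eq:thm:central.moments.eq.1}--\eqref{eq:thm:central.moments.eq.3}. Recall that for $\bb{\xi}\sim \mathrm{Multinomial}\hspace{0.2mm}(N,\bb{p})$, the MGF is $M(\bb{t}) = \EE[\exp(\sum_{i=1}^d t_i \xi_i)] = (q + \sum_{i=1}^d p_i e^{t_i})^N$, where $q = 1 - \|\bb{p}\|_1$. The central moments are obtained by differentiating the \emph{centered} MGF $\widetilde{M}(\bb{t}) \leqdef e^{-N \sum_{i} p_i t_i} M(\bb{t})$, or equivalently the cumulant generating function $K(\bb{t}) \leqdef \log M(\bb{t}) = N \log(q + \sum_{i} p_i e^{t_i})$, and evaluating derivatives at $\bb{t} = \bb{0}$. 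Working with $K(\bb{t})$ is advantageous because joint cumulants are additive over the $N$ i.i.d.\ trials and the low-order central moments coincide with the corresponding cumulants (this is exactly why the third central moment has such a clean form).

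First I would establish the base case: since each trial is a single categorical draw, $\EE[\xi_i] = N p_i$, which immediately gives \eqref{eq:thm:central.moments.eq.1}. For the second-order moment, I would compute $\partial^2 K / \partial t_i \partial t_j$ at $\bb{0}$. Writing $S(\bb{t}) = q + \sum_{k} p_k e^{t_k}$, one has $\partial_i K = N p_i e^{t_i} / S$, and differentiating again yields $\partial_i \partial_j K = N[\ind_{\{i=j\}} p_i e^{t_i}/S - p_i p_j e^{t_i + t_j}/S^2]$. Evaluating at $\bb{t} = \bb{0}$ (where $S = 1$) gives $N(p_i \ind_{\{i=j\}} - p_i p_j)$, which is \eqref{eq:thm:central.moments.eq.2}. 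Because the second cumulant equals the second central moment, no correction is needed.

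The main computational step is the third-order moment. I would differentiate once more to obtain $\partial_i \partial_j \partial_\ell K$ at $\bb{0}$, carefully tracking the three types of terms that arise from the quotient rule: the diagonal term $\ind_{\{i=j=\ell\}} p_i$ from the leading factor, the three pairwise terms $-\ind_{\{i=j\}} p_i p_\ell$, $-\ind_{\{j=\ell\}} p_i p_j$, $-\ind_{\{i=\ell\}} p_j p_\ell$ arising when two indices collide, and the fully off-diagonal term $2 p_i p_j p_\ell$ coming from the $2/S^3$ contribution. Since the third cumulant and the third central moment agree, assembling these pieces gives exactly \eqref{eq:thm:central.moments.eq.3}.

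The hard part here is purely bookkeeping rather than conceptual: one must apply the quotient rule three times and correctly collect the combinatorial multiplicities of the Kronecker-delta terms, making sure the factor of $2$ on $p_i p_j p_\ell$ and the three distinct pairwise indicator patterns are all accounted for without double-counting. An alternative I would consider is an inductive or probabilistic argument exploiting the representation $\bb{\xi} = \sum_{m=1}^N \bb{Y}_m$ with $\bb{Y}_m$ i.i.d.\ single-trial vectors, using additivity of cumulants to reduce everything to the $N=1$ case and then multiplying by $N$; this sidesteps some of the differentiation but requires the same careful collection of terms. Either way, the result is exactly as stated, and since the paper notes these moments were already derived in \cite{arXiv:2006.09059}, I would present the MGF computation as the self-contained verification.
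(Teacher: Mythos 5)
Your proof is correct and takes essentially the same approach as the paper: the paper states this lemma without an in-text proof, attributing it to a reference that obtains the moments by differentiating the moment generating function, which is exactly your computation (your passage through the cumulant generating function is a minor streamlining, justified since cumulants of order at most three coincide with central moments). The third-order bookkeeping you outline checks out, the only cosmetic difference being that your term $-\ind_{\{i=\ell\}}p_i p_j$ equals the stated $-\ind_{\{i=\ell\}}p_j p_\ell$ on the event $i=\ell$.
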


We can also estimate the moments of Lemma~\ref{lem:Leblanc.2012.boundary.Lemma.1} on various events.
The lemma below is used to estimate the $\asymp N^{-1/2}$ errors in \eqref{eq:estimate.I.begin} of the proof of Lemma~\ref{lem:prelim.Carter}.

\begin{lemma}\label{lem:Leblanc.2012.boundary.Lemma.1.with.set.A}
    Let $\bb{p}\in (0,1)^d$ be such that $\|\bb{p}\|_1 < 1$, and let $A\in \mathscr{B}(\R^d)$ be a Borel set.
    If $\bb{\xi} = (\xi_1,\xi_2,\dots,\xi_d)\sim \mathrm{Multinomial}\hspace{0.2mm}(N,\bb{p})$ according to \eqref{eq:multinomial.pdf}, then, for all $i,j,\ell\in \{1,2,\dots,d\}$,
    \begin{align}
        &\Big|\EE\big[(\xi_i - N p_i) \, \ind_{\{\bb{\xi}\in A\}}\big]\Big| \leq \frac{1}{2} N^{1/2} \big(\PP(\bb{\xi}\in A^c)\big)^{1/2}, \label{eq:thm:central.moments.eq.1.set.A} \\[2mm]
        &\Big|\EE\big[(\xi_i - N p_i)(\xi_j - N p_j) \, \ind_{\{\bb{\xi}\in A\}}\big] - N \, (p_i \ind_{\{i = j\}} - p_i p_j)\Big| \leq \frac{1}{2} N \big(\PP(\bb{\xi}\in A^c)\big)^{1/2}, \label{eq:thm:central.moments.eq.2.set.A} \\[2mm]
        &\left|\hspace{-1mm}
            \begin{array}{l}
                \EE\big[(\xi_i - N p_i)(\xi_j - N p_j)(\xi_{\ell} - N p_{\ell}) \, \ind_{\{\bb{\xi}\in A\}}\big] \\[1mm]
                 - N \bigg(\hspace{-1mm}
                    \begin{array}{l}
                        2 p_i p_j p_{\ell} - \ind_{\{i = j\}} p_i p_{\ell} - \ind_{\{j = \ell\}} p_i p_j \\
                        - \ind_{\{i = \ell\}} p_j p_{\ell} + \ind_{\{i = j = \ell\}} p_i
                    \end{array}
                    \hspace{-1mm}\bigg)
            \end{array}
            \hspace{-1mm}\right| \leq \frac{1}{\sqrt{8}} N^{3/2} \big(\PP(\bb{\xi}\in A^c)\big)^{1/4} \label{eq:thm:central.moments.eq.3.set.A}.
    \end{align}
\end{lemma}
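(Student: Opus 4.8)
The plan is to reduce each of the three bounds to controlling the \emph{same} central moment restricted to the complementary event $A^c$, where the indicator will supply the probability factor. Since each marginal $\xi_i$ is $\mathrm{Binomial}(N,p_i)$ and $\bb{\xi}\sim\mathrm{Multinomial}(N,\bb{p})$, Lemma~\ref{lem:Leblanc.2012.boundary.Lemma.1} gives the \emph{exact} unrestricted values of all the moments appearing here. Writing $\ind_{\{\bb{\xi}\in A\}} = 1 - \ind_{\{\bb{\xi}\in A^c\}}$, the left-hand side of \eqref{eq:thm:central.moments.eq.1.set.A} is exactly $|\EE[(\xi_i - N p_i)\ind_{\{\bb{\xi}\in A^c\}}]|$ (because the unrestricted first central moment vanishes), and the left-hand sides of \eqref{eq:thm:central.moments.eq.2.set.A} and \eqref{eq:thm:central.moments.eq.3.set.A} are exactly $|\EE[(\xi_i - N p_i)(\xi_j - N p_j)\ind_{\{\bb{\xi}\in A^c\}}]|$ and $|\EE[(\xi_i - N p_i)(\xi_j - N p_j)(\xi_\ell - N p_\ell)\ind_{\{\bb{\xi}\in A^c\}}]|$, since the subtracted constants are precisely the unrestricted second and third central moments from Lemma~\ref{lem:Leblanc.2012.boundary.Lemma.1}. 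Thus the whole problem becomes: bound these tail-restricted products by powers of $\PP(\bb{\xi}\in A^c)$.

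The only analytic inputs I would need are two elementary moment estimates, both from the marginal binomial structure: $\VV(\xi_i) = N p_i(1-p_i) \le N/4$, and the fourth central moment $\EE[(\xi_i - N p_i)^4] = N p_i(1-p_i)\,[1 + (3N-6)p_i(1-p_i)] \le N(3N-2)/16 \le N^2/4$, where both inequalities use $p_i(1-p_i)\le 1/4$. For \eqref{eq:thm:central.moments.eq.1.set.A}, a single Cauchy--Schwarz peeling off the indicator gives $|\EE[(\xi_i - N p_i)\ind_{\{\bb{\xi}\in A^c\}}]| \le (\VV(\xi_i))^{1/2}(\PP(\bb{\xi}\in A^c))^{1/2} \le \tfrac12 N^{1/2}(\PP(\bb{\xi}\in A^c))^{1/2}$, which is the stated constant. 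For \eqref{eq:thm:central.moments.eq.2.set.A}, Cauchy--Schwarz extracting the indicator gives a factor $(\PP(\bb{\xi}\in A^c))^{1/2}$ times $(\EE[(\xi_i - N p_i)^2(\xi_j - N p_j)^2])^{1/2}$; a second Cauchy--Schwarz bounds the fourth-order product moment by $(\EE[(\xi_i - N p_i)^4]\,\EE[(\xi_j - N p_j)^4])^{1/2} \le N^2/4$ uniformly in $i,j$ (covering both $i=j$ and $i\ne j$ without computing any joint fourth moment), yielding the constant $\tfrac12$.

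For the third-order bound \eqref{eq:thm:central.moments.eq.3.set.A} I would iterate Cauchy--Schwarz, grouping the three factors as two-plus-one and keeping the indicator with the lone factor: $|\EE[(\xi_i - N p_i)(\xi_j - N p_j)(\xi_\ell - N p_\ell)\ind_{\{\bb{\xi}\in A^c\}}]| \le (\EE[(\xi_i - N p_i)^2(\xi_j - N p_j)^2])^{1/2}\,(\EE[(\xi_\ell - N p_\ell)^2\ind_{\{\bb{\xi}\in A^c\}}])^{1/2}$. The first factor is $\le N/2$ by the bound just established; on the second, one more Cauchy--Schwarz gives $\EE[(\xi_\ell - N p_\ell)^2\ind_{\{\bb{\xi}\in A^c\}}] \le (\EE[(\xi_\ell - N p_\ell)^4])^{1/2}(\PP(\bb{\xi}\in A^c))^{1/2} \le \tfrac{N}{2}(\PP(\bb{\xi}\in A^c))^{1/2}$, whose square root is $\tfrac{N^{1/2}}{\sqrt2}(\PP(\bb{\xi}\in A^c))^{1/4}$. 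Multiplying the two factors produces $\tfrac{1}{\sqrt8}\,N^{3/2}(\PP(\bb{\xi}\in A^c))^{1/4}$, matching \eqref{eq:thm:central.moments.eq.3.set.A} exactly. The power $1/4$ (weaker than the $1/2$ a single Cauchy--Schwarz would give) arises precisely from splitting the indicator across two applications, and this is a feature rather than a defect: it keeps the entire argument within fourth-order moments, so no sixth central moment of the multinomial is ever needed.

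The main obstacle, such as it is, lies only in hitting the advertised constants $\tfrac12,\tfrac12,\tfrac1{\sqrt8}$ on the nose rather than up to an absolute factor; everything reduces to the single clean inequality $\EE[(\xi_i - N p_i)^2(\xi_j - N p_j)^2] \le N^2/4$, and the verification that $N(3N-2)/16 \le N^2/4$ for all $N\ge 1$ together with $p_i(1-p_i)\le 1/4$ is what makes the crude bookkeeping tight enough. The only point requiring a moment's care is the cross term $i\ne j$, handled by the extra Cauchy--Schwarz reducing it to marginal binomial fourth moments, after which the rest is mechanical.
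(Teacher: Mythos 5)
Your proposal is correct and follows essentially the same route as the paper: the same reduction via $\ind_{\{\bb{\xi}\in A\}} = 1 - \ind_{\{\bb{\xi}\in A^c\}}$ (recognizing the subtracted constants as the unrestricted central moments of Lemma~\ref{lem:Leblanc.2012.boundary.Lemma.1}), followed by the same marginal binomial bounds $\EE[(\xi_i - Np_i)^2] \le N/4$ and $\EE[(\xi_i - Np_i)^4] \le N^2/4$. The only cosmetic difference is that the paper applies H\"older directly with exponents $(4,4,2)$ and $(4,4,4,4)$ where you iterate Cauchy--Schwarz, which is the standard proof of those H\"older instances and produces the identical constants $\tfrac12$, $\tfrac12$, $\tfrac{1}{\sqrt{8}}$.
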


\begin{proof}
    For the bound in \eqref{eq:thm:central.moments.eq.1.set.A}, Equation \eqref{eq:thm:central.moments.eq.1}, Cauchy-Schwarz and a standard bound on the variance of the binomial distribution yield
    \begin{align}\label{eq:thm:central.moments.eq.1.set.A.proof}
        \Big|\EE\big[(\xi_i - N p_i) \, \ind_{\{\bb{\xi}\in A\}}\big]\Big|
        &= \Big|\EE\big[(\xi_i - N p_i) \, \ind_{\{\bb{\xi}\in A^c\}}\big]\Big| \notag \\
        &\leq \big(\EE\big[(\xi_i - N p_i)^2\big]\big)^{1/2} \big(\PP(\bb{\xi}\in A^c)\big)^{1/2} \notag \\
        &\leq \frac{1}{2} N^{1/2} \big(\PP(\bb{\xi}\in A^c)\big)^{1/2}.
    \end{align}
    For the bound in \eqref{eq:thm:central.moments.eq.2.set.A}, Equation \eqref{eq:thm:central.moments.eq.2}, Holder's inequality and a standard bound on the fourth central moment of the binomial distribution yield
    \begin{align}\label{eq:thm:central.moments.eq.2.set.A.proof}
        &\Big|\EE\big[(\xi_i - N p_i)(\xi_j - N p_j) \, \ind_{\{\bb{\xi}\in A\}}\big] - N \, (p_i \ind_{\{i = j\}} - p_i p_j)\Big| \notag \\
        &\quad= \Big|\EE\big[(\xi_i - N p_i)(\xi_j - N p_j) \, \ind_{\{\bb{\xi}\in A^c\}}\big]\Big| \notag \\
        &\quad\leq \big(\EE\big[(\xi_i - N p_i)^4\big]\big)^{1/4} \big(\EE\big[(\xi_j - N p_j)^4\big]\big)^{1/4} \big(\PP(\bb{\xi}\in A^c)\big)^{1/2} \notag \\[1mm]
        &\quad\leq \big(\tfrac{1}{4} N^2\big)^{1/4} \big(\tfrac{1}{4} N^2\big)^{1/4} \big(\PP(\bb{\xi}\in A^c)\big)^{1/2} \notag \\[1mm]
        &\quad= \frac{1}{2} N \big(\PP(\bb{\xi}\in A^c)\big)^{1/2}.
    \end{align}
    For the bound in \eqref{eq:thm:central.moments.eq.3.set.A}, Equation \eqref{eq:thm:central.moments.eq.3}, Holder's inequality and a standard bound on the fourth central moment of the binomial distribution yield
    \begin{align}\label{eq:thm:central.moments.eq.3.set.A.proof}
        &\left|\hspace{-1mm}
            \begin{array}{l}
                \EE\big[(\xi_i - N p_i)(\xi_j - N p_j)(\xi_{\ell} - N p_{\ell}) \, \ind_{\{\bb{\xi}\in A\}}\big] \\[1mm]
                 - N \, \big(2 p_i p_j p_{\ell} - \ind_{\{i = j\}} p_i p_{\ell} - \ind_{\{j = \ell\}} p_i p_j - \ind_{\{i = \ell\}} p_j p_{\ell} + \ind_{\{i = j = \ell\}} p_i\big)
            \end{array}
            \hspace{-1mm}\right| \notag \\[0.5mm]
        &\quad= \Big|\EE\big[(\xi_i - N p_i)(\xi_j - N p_j)(\xi_{\ell} - N p_{\ell}) \, \ind_{\{\bb{\xi}\in A^c\}}\big]\Big| \notag \\[0.5mm]
        &\quad\leq \big(\EE\big[(\xi_i - N p_i)^4\big]\big)^{1/4} \big(\EE\big[(\xi_j - N p_j)^4\big]\big)^{1/4} \big(\EE\big[(\xi_{\ell} - N p_{\ell})^4\big]\big)^{1/4} \big(\PP(\bb{\xi}\in A^c)\big)^{1/4} \notag \\[1.5mm]
        &\quad\leq \big(\tfrac{1}{4} N^2\big)^{1/4} \big(\tfrac{1}{4} N^2\big)^{1/4} \big(\tfrac{1}{4} N^2\big)^{1/4} \big(\PP(\bb{\xi}\in A^c)\big)^{1/4} \notag \\[0.5mm]
        &\quad= \frac{1}{\sqrt{8}} N^{3/2} \big(\PP(\bb{\xi}\in A^c)\big)^{1/4}.
    \end{align}
    This ends the proof.
\end{proof}

For the joint central moments $4$ and $6$, we have the following results.
These estimates are crucial to bound the $\asymp N^{-1}$ errors in \eqref{eq:estimate.I.begin} of the proof of Lemma~\ref{lem:prelim.Carter}.

\begin{lemma}[Joint central moments $4$ and $6$]\label{lem:moments.4.to.6}
    Let $\bb{p}\in (0,1)^d$ be such that $\|\bb{p}\|_1 < 1$.
    If $\bb{\xi} = (\xi_1,\xi_2,\dots,\xi_d)\sim \mathrm{Multinomial}\hspace{0.2mm}(N,\bb{p})$ according to \eqref{eq:multinomial.pdf}, then, for all $i \neq j$ in $\{1,2,\dots,d\}$,
    \begin{align}
        &\EE\big[(\xi_i - N p_i)^4\big] = 3 \, N^2 p_i^2 \, (1 - p_i)^2 + \OO(N), \label{eq:thm:central.moments.4.0} \\[2mm]
        &\EE\big[(\xi_i - N p_i)^6\big] = 15 \, N^3 p_i^3 \, (1 - p_i)^3 + \OO(N^2), \label{eq:thm:central.moments.6.0} \\[2mm]
        &\EE\big[(\xi_i - N p_i)^3 (\xi_j - N p_j)^3\big] = N^3 p_i^2 p_j^2 \, \big(-9 + 9 \, p_j + 9 \, p_i - 15 \, p_i p_j\big) + \OO(N^2). \label{eq:thm:central.moments.3.3}
    \end{align}
\end{lemma}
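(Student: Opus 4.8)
The plan is to reduce all three moments to a single combinatorial device: realize $\bb{\xi}$ through $N$ independent categorical trials and, for each trial $m$ and category $i$, let $Z_m^{(i)}$ be the centered indicator that trial $m$ falls in category $i$, i.e. $Z_m^{(i)} \leqdef \ind_{\{\text{trial } m = i\}} - p_i$, so that $\xi_i - N p_i = \sum_{m=1}^N Z_m^{(i)}$. The per-trial vectors $(Z_m^{(1)},\dots,Z_m^{(d)})$ are independent across $m$, each $Z_m^{(i)}$ is centered, and the only nonzero second-order per-trial expectations are $\EE[(Z_m^{(i)})^2] = p_i(1-p_i)$ and, for $i \neq j$, $\EE[Z_m^{(i)} Z_m^{(j)}] = -p_i p_j$ (two distinct categories cannot both occur on the same trial). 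This covariance structure is exactly what drives the stated constants.

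For the marginal moments $\EE[(\xi_i - N p_i)^{2r}]$ with $r \in \{2,3\}$, I would expand $(\sum_m Z_m^{(i)})^{2r}$ into a multi-index sum and use independence across trials together with $\EE[Z_m^{(i)}] = 0$: any trial index occurring exactly once annihilates its term. The dominant contribution comes from the configurations in which the $2r$ factors are distributed among exactly $r$ distinct trials, two per trial; there are $(2r-1)!!$ perfect matchings of the $2r$ labelled factors and $N(N-1)\cdots(N-r+1)$ ways to place the pairs on distinct trials, giving the leading term $(2r-1)!!\, N^r (p_i(1-p_i))^r$, namely $3\,N^2 (p_i(1-p_i))^2$ for $r=2$ and $15\,N^3 (p_i(1-p_i))^3$ for $r=3$. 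Every other admissible configuration occupies at most $r-1$ distinct trials and is thus $\OO(N^{r-1})$, which is $\OO(N)$ and $\OO(N^2)$ respectively, matching \eqref{eq:thm:central.moments.4.0} and \eqref{eq:thm:central.moments.6.0}.

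The cross moment $\EE[(\xi_i - N p_i)^3 (\xi_j - N p_j)^3]$ is handled the same way, now expanding $(\sum_m Z_m^{(i)})^3 (\sum_n Z_n^{(j)})^3$ over three $i$-indices and three $j$-indices. Again only configurations in which every occupied trial carries at least two of the six factors survive, and the $N^3$ leading term comes from exactly three distinct trials carrying two factors each. Summing over the $5!! = 15$ perfect matchings of the six labelled factors, classified by per-trial pair type — an $(i,i)$ pair contributing $p_i(1-p_i)$, a $(j,j)$ pair contributing $p_j(1-p_j)$, and a cross pair $(i,j)$ contributing $-p_i p_j$ — the requirement that the three $i$'s and three $j$'s be exhausted forces exactly two shapes: three cross pairs ($6$ matchings, each $(-p_i p_j)^3$) or one $(i,i)$, one $(j,j)$ and one cross pair ($9$ matchings, each $p_i(1-p_i)\,p_j(1-p_j)\,(-p_i p_j)$). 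Adding and factoring gives $p_i^2 p_j^2(-9 + 9 p_i + 9 p_j - 15 p_i p_j)$, and multiplying by $N(N-1)(N-2) = N^3 + \OO(N^2)$ yields \eqref{eq:thm:central.moments.3.3}.

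The main obstacle is the bivariate bookkeeping in this last step: one must correctly enumerate the admissible matching shapes under the parity constraint (in the notation counting $(i,i)$, $(j,j)$, and cross pairs as $a$, $b$, $c$, the constraints $2a+c=3$, $2b+c=3$, $a+b+c=3$ force $a=b$ and $c=3-2a$, so only $a\in\{0,1\}$ occur), count the labelled matchings of each shape, and verify that all sub-leading configurations (two or fewer occupied trials) contribute only $\OO(N^2)$. An alternative that sidesteps the matching combinatorics is to differentiate the joint moment generating function $\big(\sum_{i} p_i e^{t_i} + q\big)^N$ and extract the relevant mixed derivatives at the origin; this is the route referenced in the text, but it trades combinatorial care for heavier symbolic differentiation, so I would prefer the pairing argument for the transparency it gives to the leading constants $3$, $15$, and $-9+9p_i+9p_j-15p_ip_j$.
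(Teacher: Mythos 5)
Your proof is correct, but it takes a genuinely different route from the paper. The paper works from the factorial-moment identity $\EE\big[\xi_1^{(a_1)}\cdots\xi_d^{(a_d)}\big] = N^{(\|\bb{a}\|_1)}p_1^{a_1}\cdots p_d^{a_d}$, converts each ordinary power $\xi_i^r$ into a combination of falling factorials, expands the centered moments binomially, and tracks every coefficient by hand; to avoid verifying the cancellation of the $N^4$-and-higher terms directly, it argues indirectly (via Cauchy--Schwarz and the known order of the sixth central moment) that those terms must vanish. You instead write $\xi_i - Np_i = \sum_{m=1}^N Z_m^{(i)}$ with centered per-trial indicators and extract the leading term by Wick/pairing combinatorics: the only second-order per-trial expectations are $\EE[(Z_m^{(i)})^2]=p_i(1-p_i)$ and $\EE[Z_m^{(i)}Z_m^{(j)}]=-p_ip_j$, singleton trial indices annihilate their terms, and the $N^r$ coefficient is a sum over perfect matchings. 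Your enumeration checks out: the parity constraints $2a+c=2b+c=3$, $a+b+c=3$ do force $a=b\in\{0,1\}$, giving $6\cdot(-p_ip_j)^3 + 9\cdot p_i(1-p_i)p_j(1-p_j)(-p_ip_j) = p_i^2p_j^2(-9+9p_i+9p_j-15p_ip_j)$, and all configurations on at most two trials are $\OO(N^2)$. What your approach buys is brevity and transparency: the constants $3$, $15$ and the cross-moment polynomial fall out of the matching count rather than from pages of coefficient bookkeeping, and you need no separate argument that the high-order powers of $N$ cancel, since your expansion never produces them. What the paper's approach buys is that it rests on a single standard cited formula and, with more patience, yields the exact lower-order terms as well, not just the leading one.
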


\begin{proof}
    If we denote $x^{(r)} \leqdef x (x - 1) (x - 2) \dots (x - r + 1)$, we know from \cite[p.67]{MR143299} that, for all $\bb{a} \leqdef (a_1,a_2,\dots,a_d)\in \N_0^d$,
    \begin{equation}
        \EE\big[\xi_1^{(a_1)} \xi_2^{(a_2)} \dots \xi_d^{(a_d)}\big] = N^{(\|\bb{a}\|_1)} p_1^{a_1} p_2^{a_2} \dots p_d^{a_d}.
    \end{equation}
    Hence, for all $i \neq j$ in $\{1,2,\dots,d\}$,
    \begin{align}
        \EE\big[\xi_i\big]
        &= N p_i, \\[1.5mm]
        \EE\big[\xi_i^2\big]
        &= \EE\big[\xi_i + \xi_i^{(2)}\big] \notag \\
        &= N p_i + N^{(2)} p_i^2, \\[1.5mm]
        \EE\big[\xi_i \xi_j\big]
        &= N^{(2)} p_i p_j, \\[1.5mm]
        \EE\big[\xi_i^3\big]
        &= \EE\big[\xi_i + 3 \, \xi_i^{(2)} + \xi_i^{(3)}\big] \notag \\
        &= N p_i + 3 \, N^{(2)} p_i^2 + N^{(3)} p_i^3, \\[1.5mm]
        \EE\big[\xi_i^2 \xi_j\big]
        &= \EE\big[(\xi_i + \xi_i^{(2)}) \xi_j\big] \notag \\
        &= N^{(2)} p_i p_j + N^{(3)} p_i^2 p_j, \\[1.5mm]
        \EE\big[\xi_i \xi_j^2\big]
        &= N^{(2)} p_i p_j + N^{(3)} p_i p_j^2, \\[1.5mm]
        \EE\big[\xi_i^4\big]
        &= \EE\big[\xi_i + 7 \, \xi_i^{(2)} + 6 \, \xi_i^{(3)} + \xi_i^{(4)}\big] \notag \\
        &= N p_i + 7 \, N^{(2)} p_i^2 + 6 \, N^{(3)} p_i^3 + N^{(4)} p_i^4, \\[1.5mm]
        \EE\big[\xi_i^3 \, \xi_j\big]
        &= \EE\big[(\xi_i + 3 \, \xi_i^{(2)} + \xi_i^{(3)}) \xi_j\big] \notag \\
        &= \EE\big[\xi_i \xi_j\big] + 3 \, \EE\big[\xi_i^{(2)} \xi_j\big] + \EE\big[\xi_i^{(3)} \xi_j\big] \notag \\
        &= N^{(2)} p_i p_j + 3 \, N^{(3)} p_i^2 p_j + N^{(4)} p_i^3 p_j, \\[1.5mm]
        \EE\big[\xi_i \xi_j^3\big]
        &= N^{(2)} p_i p_j + 3 \, N^{(3)} p_i p_j^2 + N^{(4)} p_i p_j^3, \\[1.5mm]
        \EE\big[\xi_i^2 \xi_j^2\big]
        &= \EE\big[(\xi_i + \xi_i^{(2)}) (\xi_j + \xi_j^{(2)})\big] \notag \\
        &= \EE\big[\xi_i \xi_j\big] + \EE\big[\xi_i \xi_j^{(2)}\big] + \EE\big[\xi_i^{(2)} \xi_j\big] + \EE\big[\xi_i^{(2)} \xi_j^{(2)}\big] \notag \\
        &= N^{(2)} p_i p_j + N^{(3)} p_i p_j^2 + N^{(3)} p_i^2 p_j + N^{(4)} p_i^2 p_j^2, \\[1.5mm]
        \EE\big[\xi_i^5\big]
        &= \EE\big[\xi_i + 15 \, \xi_i^{(2)} + 25 \, \xi_i^{(3)} + 10 \, \xi_i^{(4)} + \xi_i^{(5)}\big] \notag \\
        &= N p_i + 15 \, N^{(2)} p_i^2 + 25 \, N^{(3)} p_i^3 + 10 \, N^{(4)} p_i^4 + N^{(5)} p_i^5, \\[1.5mm]
        \EE\big[\xi_i^3 \, \xi_j^2\big]
        &= \EE\big[(\xi_i + 3 \, \xi_i^{(2)} + \xi_i^{(3)}) (\xi_j + \xi_j^{(2)})\big] \notag \\
        &= \EE\big[\xi_i \xi_j\big] + \EE\big[\xi_i \xi_j^{(2)}\big] + 3 \, \EE\big[\xi_i^{(2)} \xi_j\big] \notag \\
        &\quad+ 3 \, \EE\big[\xi_i^{(2)} \xi_j^{(2)}\big] + \EE\big[\xi_i^{(3)} \xi_j\big] + \EE\big[\xi_i^{(3)} \xi_j^{(2)}\big] \notag \\
        &= N^{(2)} p_i p_j + N^{(3)} p_i p_j^2 + 3 \, N^{(3)} p_i^2 p_j + 3 \, N^{(4)} p_i^2 p_j^2 + N^{(4)} p_i^3 p_j + N^{(5)} p_i^3 p_j^2, \\[1.5mm]
        \EE\big[\xi_i^2 \xi_j^3\big]
        &= N^{(2)} p_i p_j + N^{(3)} p_i^2 p_j + 3 \, N^{(3)} p_i p_j^2 + 3 \, N^{(4)} p_i^2 p_j^2 + N^{(4)} p_i p_j^3 + N^{(5)} p_i^2 p_j^3, \\[1.5mm]
        \EE\big[\xi_i^6\big]
        &= \EE\big[\xi_i + 31 \, \xi_i^{(2)} + 90 \, \xi_i^{(3)} + 65 \, \xi_i^{(4)} + 15 \, \xi_i^{(5)} + \xi_i^{(6)}\big] \notag \\
        &= N p_i + 31 \, N^{(2)} p_i^2 + 90 \, N^{(3)} p_i^3 + 65 \, N^{(4)} p_i^4 + 15 \, N^{(5)} p_i^5 + N^{(6)} p_i^6, \\[1.5mm]
        \EE\big[\xi_i^3 \, \xi_j^3\big]
        &= \EE\big[(\xi_i + 3 \, \xi_i^{(2)} + \xi_i^{(3)}) (\xi_j + 3 \, \xi_j^{(2)} + \xi_j^{(3)})\big] \notag \\
        &= \EE\big[\xi_i \xi_j\big] + 3 \, \EE\big[\xi_i \xi_j^{(2)}\big] + \EE\big[\xi_i \xi_j^{(3)}\big] + 3 \, \EE\big[\xi_i^{(2)} \xi_j\big] + 9 \, \EE\big[\xi_i^{(2)} \xi_j^{(2)}\big] \notag \\
        &\quad+ 3 \, \EE\big[\xi_i^{(2)} \xi_j^{(3)}\big] + \EE\big[\xi_i^{(3)} \xi_j\big] + 3 \, \EE\big[\xi_i^{(3)} \xi_j^{(2)}\big] + \EE\big[\xi_i^{(3)} \xi_j^{(3)}\big] \notag \\
        &= N^{(2)} p_i p_j + 3 \, N^{(3)} p_i p_j^2 + N^{(4)} p_i p_j^3 + 3 \, N^{(3)} p_i^2 p_j + 9 \, N^{(4)} p_i^2 p_j^2 \notag \\
        &\quad+ 3 \, N^{(5)} p_i^2 p_j^3 + N^{(4)} p_i^3 p_j + 3 \, N^{(5)} p_i^3 p_j^2 + N^{(6)} p_i^3 p_j^3.
    \end{align}
    We deduce
    \begin{align}\label{eq:calculation.power.4}
        \EE\big[(\xi_i - N p_i)^4\big]
        &= \EE\big[\xi_i^4\big] - 4 \, (N p_i) \, \EE\big[\xi_i^3\big] + 6 \, (N p_i)^2 \, \EE\big[\xi_i^2\big] - 4 \, (N p_i)^3 \, \EE\big[\xi_i\big] + (N p_i)^4 \notag \\
        &= \big(N p_i + 7 \, N^{(2)} p_i^2 + 6 \, N^{(3)} p_i^3 + N^{(4)} p_i^4\big) \notag \\
        &\quad- 4 \, (N p_i) \, \big(N p_i + 3 \, N^{(2)} p_i^2 + N^{(3)} p_i^3\big) \notag \\
        &\quad+ 6 \, (N p_i)^2 \, \big(N p_i + N^{(2)} p_i^2\big) - 4 \, (N p_i)^3 \, \big(N p_i\big) + (N p_i)^4 \notag \\
        &= \bigg(\hspace{-1mm}
            \begin{array}{l}
                N p_i + 7 \, N^2 p_i^2 - 7 \, N p_i^2 + 6 \, N^3 p_i^3 - 18 \, N^2 p_i^3 \\
                + 12 \, N p_i^3 + N^4 p_i^4 - 6 \, N^3 p_i^4 + 11 \, N^2 p_i^4 - 6 \, N p_i^4
            \end{array}
            \hspace{-1mm}\bigg) \notag \\
        &\quad- 4 \, (N p_i) \, \big(N p_i + 3 \, N^2 p_i^2 - 3 \, N p_i^2 + N^3 p_i^3 - 3 \, N^2 p_i^3 + 2 \, N p_i^3\big) \notag \\
        &\quad+ 6 \, (N p_i)^2 \, \big(N p_i + N^2 p_i^2 - N p_i^2\big) - 4 \, (N p_i)^3 \, \big(N p_i\big) + (N p_i)^4 \notag \\
        &= N p_i + 7 \, N^2 p_i^2 - 7 \, N p_i^2 - 18 \, N^2 p_i^3 + 12 \, N p_i^3 - 6 \, N^3 p_i^4 + 11 \, N^2 p_i^4 \notag \\
        &\quad- 6 \, N p_i^4 - 4 \, N^2 p_i^2 + 12 \, N^2 p_i^3 + 12 \, N^3 p_i^4 - 8 \, N^2 p_i^4 - 6 \, N^3 p_i^4 \notag \\
        &= N p_i + 3 \, N^2 p_i^2 - 7 \, N p_i^2 - 6 \, N^2 p_i^3 + 12 \, N p_i^3 + 3 \, N^2 p_i^4 - 6 \, N p_i^4 \notag \\
        &= N^2 p_i^2 \, \big(3 - 6 \, p_i + 3 \, p_i^2\big) + \OO(N),
    \end{align}
    which proves the claim in \eqref{eq:thm:central.moments.4.0}.
    In a similar manner,
    \begin{align}\label{eq:calculation.power.6}
        \EE\big[(\xi_i - N p_i)^6\big]
        &= \EE\big[\xi_i^6\big] - 6 \, (N p_i) \, \EE\big[\xi_i^5\big] + 15 \, (N p_i)^2 \, \EE\big[\xi_i^4\big] - 20 \, (N p_i)^3 \, \EE\big[\xi_i^3\big] \notag \\
        &\quad+ 15 \, (N p_i)^4 \, \EE\big[\xi_i^2\big] - 6 \, (N p_i)^5 \, \EE\big[\xi_i\big] + (N p_i)^6 \notag \\
        &= \big(N p_i + 31 \, N^{(2)} p_i^2 + 90 \, N^{(3)} p_i^3 + 65 \, N^{(4)} p_i^4 + 15 \, N^{(5)} p_i^5 + N^{(6)} p_i^6\big) \notag \\
        &\quad- 6 \, (N p_i) \big(N p_i + 15 \, N^{(2)} p_i^2 + 25 \, N^{(3)} p_i^3 + 10 \, N^{(4)} p_i^4 + N^{(5)} p_i^5\big) \notag \\
        &\quad+ 15 \, (N p_i)^2 \big(N p_i + 7 \, N^{(2)} p_i^2 + 6 \, N^{(3)} p_i^3 + N^{(4)} p_i^4\big) \notag \\
        &\quad- 20 \, (N p_i)^3 \big(N p_i + 3 \, N^{(2)} p_i^2 + N^{(3)} p_i^3\big) \notag \\
        &\quad+ 15 \, (N p_i)^4 \big(N p_i + N^{(2)} p_i^2\big) - 6 \, (N p_i)^5 \big(N p_i\big) + (N p_i)^6.
    \end{align}
    It is well known that $\EE\big[(\xi_i - N p_i)^6\big] = \OO(N^3)$ for the binomial, so the terms with powers $N^4$ and above must cancel out in \eqref{eq:calculation.power.6}.
    Therefore, we get
    \begin{align}
        &\EE\big[(\xi_i - N p_i)^6\big] \notag \\
        &\quad= 90 \cdot 1 \, N^3 p_i^3 + 65 \cdot (-6) \, N^3 p_i^4 + 15 \cdot 35 \, N^3 p_i^5 - 225 \, N^3 p_i^6 \notag \\
        &\quad\quad- 6 \cdot 15 \, N^3 p_i^3 - 6 \cdot 25 \cdot (-3) \, N^3 p_i^4 - 6 \cdot 10 \cdot 11 \, N^3 p_i^5 - 6 \cdot (-50) \, N^3 p_i^6 \notag \\
        &\quad\quad+ 15 \, N^3 p_i^3 + 15 \cdot 7 \cdot (-1) \, N^3 p_i^4 + 15 \cdot 6 \cdot 2 \, N^3 p_i^5 + 15 \cdot (-6) \, N^3 p_i^6 + \OO(N^2) \notag \\
        &\quad= N^3 p_i^3 \, \big(15 - 45 \, p_i + 45 \, p_i^2 - 15 \, p_i^3\big) + \OO(N^2),
    \end{align}
    which proves the claim in \eqref{eq:thm:central.moments.6.0}.
    For all $i \neq j$ in $\{1,2,\dots,d\}$, we also have
    \begin{align}
        &\EE\big[(\xi_i - N p_i)^3 (\xi_j - N p_j)^3\big] \notag \\[1mm]
        &\quad= \EE\left[\hspace{-1mm}
            \begin{array}{l}
                (\xi_i^3 - 3 \, \xi_i^2 N p_i + 3 \, \xi_i (N p_i)^2 - (N p_i)^3) \\[1mm]
                \cdot \, (\xi_j^3 - 3 \, \xi_j^2 N p_j + 3 \, \xi_j (N p_j)^2 - (N p_j)^3)
            \end{array}
            \hspace{-1mm}\right] \notag \\[1.5mm]
        &\quad= \EE\big[\xi_i^3 \, \xi_j^3\big] - 3 \, N p_j \, \EE\big[\xi_i^3 \, \xi_j^2\big] + 3 \, N^2 p_j^2 \, \EE\big[\xi_i^3 \, \xi_j\big] - N^3 p_j^3 \, \EE\big[\xi_i^3\big] \notag \\
        &\quad\quad- 3 \, N p_i \, \EE\big[\xi_i^2 \xi_j^3\big] + 9 \, N^2 p_i p_j \, \EE\big[\xi_i^2 \xi_j^2\big] - 9 \, N^3 p_i p_j^2 \, \EE\big[\xi_i^2 \xi_j\big] + 3 \, N^4 p_i p_j^3 \, \EE\big[\xi_i^2\big] \notag \\
        &\quad\quad+ 3 \, N^2 p_i^2 \, \EE\big[\xi_i \xi_j^3\big] - 9 \, N^3 p_i^2 p_j \, \EE\big[\xi_i \xi_j^2\big] + 9 \, N^4 p_i^2 p_j^2 \, \EE\big[\xi_i \xi_j\big] - 3 \, N^5 p_i^2 p_j^3 \, \EE\big[\xi_i\big] \notag \\
        &\quad\quad- N^3 p_i^3 \, \EE\big[\xi_j^3\big] + 3 \, N^4 p_i^3 p_j \, \EE\big[\xi_j^2\big] - 3 \, N^5 p_i^3 p_j^2 \, \EE\big[\xi_j\big] + N^6 p_i^3 p_j^3 \notag \\[1.5mm]
        &\quad= \bigg(\hspace{-1mm}
            \begin{array}{l}
                N^{(2)} p_i p_j + 3 \, N^{(3)} p_i p_j^2 + N^{(4)} p_i p_j^3  + 3 \, N^{(3)} p_i^2 p_j + 9 \, N^{(4)} p_i^2 p_j^2 \\
                + 3 \, N^{(5)} p_i^2 p_j^3  + N^{(4)} p_i^3 p_j + 3 \, N^{(5)} p_i^3 p_j^2 + N^{(6)} p_i^3 p_j^3
            \end{array}
            \hspace{-1mm}\bigg) \notag \\
        &\quad\quad- 3 \, N p_j \bigg(\hspace{-1mm}
            \begin{array}{l}
                N^{(2)} p_i p_j + N^{(3)} p_i p_j^2 + 3 \, N^{(3)} p_i^2 p_j \\
                + 3 \, N^{(4)} p_i^2 p_j^2 + N^{(4)} p_i^3 p_j + N^{(5)} p_i^3 p_j^2
            \end{array}
            \hspace{-1mm}\bigg) \notag \\
        &\quad\quad+ 3 \, N^2 p_j^2 \big(N^{(2)} p_i p_j + 3 \, N^{(3)} p_i^2 p_j + N^{(4)} p_i^3 p_j\big) \notag \\
        &\quad\quad- N^3 p_j^3 \big(N p_i + 3 \, N^{(2)} p_i^2 + N^{(3)} p_i^3\big) \notag \\
        &\quad\quad- 3 \, N p_i \bigg(\hspace{-1mm}
            \begin{array}{l}
                N^{(2)} p_i p_j + N^{(3)} p_i^2 p_j + 3 \, N^{(3)} p_i p_j^2 \\
                + 3 \, N^{(4)} p_i^2 p_j^2 + N^{(4)} p_i p_j^3 + N^{(5)} p_i^2 p_j^3
            \end{array}
            \hspace{-1mm}\bigg) \notag \\
        &\quad\quad+ 9 \, N^2 p_i p_j \big(N^{(2)} p_i p_j + N^{(3)} p_i p_j^2 + N^{(3)} p_i^2 p_j + N^{(4)} p_i^2 p_j^2\big) \notag \\
        &\quad\quad- 9 \, N^3 p_i p_j^2 \big(N^{(2)} p_i p_j + N^{(3)} p_i^2 p_j\big) + 3 \, N^4 p_i p_j^3 \big(N p_i + N^{(2)} p_i^2\big) \notag \\
        &\quad\quad+ 3 \, N^2 p_i^2 \, \big(N^{(2)} p_i p_j + 3 \, N^{(3)} p_i p_j^2 + N^{(4)} p_i p_j^3\big) \notag \\
        &\quad\quad- 9 \, N^3 p_i^2 p_j \big(N^{(2)} p_i p_j + N^{(3)} p_i p_j^2\big) + 9 \, N^4 p_i^2 p_j^2 \big(N^{(2)} p_i p_j\big) - 3 \, N^5 p_i^2 p_j^3 \big(N p_i\big) \notag \\
        &\quad\quad- N^3 p_i^3 \big(N p_j + 3 \, N^{(2)} p_j^2 + N^{(3)} p_j^3\big) + 3 \, N^4 p_i^3 p_j \big(N p_j + N^{(2)} p_j^2\big) \notag \\
        &\quad\quad- 3 \, N^5 p_i^3 p_j^2 \big(N p_j\big) + N^6 p_i^3 p_j^3.
    \end{align}
    All the terms with powers $N^4$ and above must cancel out because otherwise there would be a constant $c = c(p_i,p_j) > 0$ small enough that
    \begin{equation}
        \begin{aligned}
            c_{p_i,p_j} N^4
            &\leq \big|\EE\big[(\xi_i - N p_i)^3 (\xi_j - N p_j)^3\big]\big| \\
            &\leq \sqrt{\EE\big[(\xi_i - N p_i)^6\big]} \sqrt{\EE\big[(\xi_i - N p_i)^6\big]} \\
            &= \OO(\sqrt{N^3} \sqrt{N^3}),
        \end{aligned}
    \end{equation}
    which is a contradiction for $N$ large enough.
    Therefore, we can write
    \begin{align}
        &\EE\big[(\xi_i - N p_i)^3 (\xi_j - N p_j)^3\big] \notag \\[1mm]
        &\quad= 3 \, N^3 p_i p_j^2 - 6 \, N^3 p_i p_j^3 + 3 \, N^3 p_i^2 p_j + 9 \cdot (-6) \, N^3 p_i^2 p_j^2 \notag \\
        &\quad\quad+ 3 \cdot 35 \, N^3 \, p_i^2 p_j^3 - 6 \, N^3 p_i^3 p_j + 3 \cdot 35 N^3 p_i^3 p_j^2 - 225 \, N^3 p_i^3 p_j^3 \notag \\
        &\quad\quad- 3 \cdot 1 \, N^3 p_i p_j^2 - 3 \cdot (-3) \, N^3 p_i p_j^3 - 3 \cdot 3 \cdot (-3) \, N^3 p_i^2 p_j^2 \notag \\
        &\quad\quad- 3 \cdot 3 \cdot 11 \, N^3 p_i^2 p_j^3 - 3 \cdot 11 \, N^3 p_i^3 p_j^2 - 3 \cdot (-50) \, N^3 p_i^3 p_j^3 \notag \\
        &\quad\quad+ 3 \cdot (-1) \, N^3 p_i p_j^3 + 3 \cdot 3 \cdot 2 \, N^3 p_i^2 p_j^3 + 3 \cdot (-6) \, N^3 p_i^3 p_j^3 \notag \\
        &\quad\quad- 3 \cdot 1 \, N^3 p_i^2 p_j - 3 \cdot (-3) \, N^3 p_i^3 p_j - 3 \cdot 3 \cdot (-3) \, N^3 p_i^2 p_j^2 \notag \\
        &\quad\quad- 3 \cdot 3 \cdot 11 \, N^3 p_i^3 p_j^2 - 3 \cdot 11 \, N^3 p_i^2 p_j^3 - 3 \cdot (-50) \, N^3 p_i^3 p_j^3 \notag \\
        &\quad\quad+ 9 \cdot (-1) \, N^3 p_i^2 p_j^2 + 9 \cdot 2 \, N^3 p_i^2 p_j^3 + 9 \cdot 2 \, N^3 p_i^3 p_j^2 + 9 \cdot (-6) \, N^3 p_i^3 p_j^3 \notag \\
        &\quad\quad+ 3 \cdot (-1) \, N^3 p_i^3 p_j + 3 \cdot 3 \cdot 2 \, N^3 p_i^3 p_j^2 + 3 \cdot (-6) \, N^3 p_i^3 p_j^3 + \OO(N^2).
    \end{align}
    Now, the terms with a factor of the form $p_i^2 p_j$ or $p_i p_j^2$ or $p_i^3 p_j$ or $p_i p_j^3$ all cancel out with each other, so we can simplify to
    \begin{align}
        &\EE\big[(\xi_i - N p_i)^3 (\xi_j - N p_j)^3\big] \notag \\[1mm]
        &\quad= -54 \, N^3 p_i^2 p_j^2 + 105 \, N^3 \, p_i^2 p_j^3 + 105 \, N^3 p_i^3 p_j^2 - 225 \, N^3 p_i^3 p_j^3 \notag \\
        &\quad\quad+ 27 \, N^3 p_i^2 p_j^2 - 99 \, N^3 p_i^2 p_j^3 - 33 \, N^3 p_i^3 p_j^2 + 150 \, N^3 p_i^3 p_j^3 \notag \\
        &\quad\quad+ 18 \, N^3 p_i^2 p_j^3 - 18 \, N^3 p_i^3 p_j^3 + 27 \, N^3 p_i^2 p_j^2 - 99 \, N^3 p_i^3 p_j^2 \notag \\
        &\quad\quad- 33 \, N^3 p_i^2 p_j^3 + 150 \, N^3 p_i^3 p_j^3 - 9 \, N^3 p_i^2 p_j^2 + 18 \, N^3 p_i^2 p_j^3 \notag \\
        &\quad\quad+ 18 \, N^3 p_i^3 p_j^2 - 54 \, N^3 p_i^3 p_j^3 + 18 \, N^3 p_i^3 p_j^2 - 18 \, N^3 p_i^3 p_j^3 + \OO(N^2) \notag \\
        &\quad= N^3 p_i^2 p_j^2 \big(-9 + 9 \, p_j + 9 \, p_i - 15 \, p_i p_j\big) + \OO(N^2),
    \end{align}
    which proves the claim in \eqref{eq:thm:central.moments.3.3}.
\end{proof}

\end{appendices}

\section*{Acknowledgments}

We thank the referees for their useful comments, in particular for bringing up the reference \cite{MR750392}.
The author acknowledges support of a postdoctoral fellowship from the NSERC (PDF) and a supplement from the FRQNT (B3X).

%
%

\phantomsection
\addcontentsline{toc}{chapter}{References}

\bibliographystyle{authordate1}
\bibliography{Ouimet_2020_LLT_multinomial_bib}

\end{document}